%#! latexmk -pdfdvi -f

%\documentclass[12pt, a4paper, reqno]{amsart}
\documentclass[12pt,a4paper]{amsart}
%
%
%%%%%%%%%%%%%%% Include Package file %%%%%%%%%%%%%%%
\usepackage{amssymb} 
\usepackage{mathrsfs} 
\usepackage{newtxtext}
\usepackage[varg]{newtxmath}
\usepackage[shortlabels]{enumitem} 
\usepackage{bm}
\usepackage[colorlinks=false]{hyperref}
\usepackage{graphicx, xcolor}
%\usepackage{appendix}
%%%%%%%%%%%%%%% Paper size settings %%%%%%%%%%%%%%%
%
%\usepackage{geometry}
\usepackage{constants}
%\geometry{a4paper}
%\geometry{top=3cm,  bottom=3cm,  left=2cm,  right=2cm}
%
%
%
%%%%%%%%% 1.1 times Line stretch %%%%%%%%%
% If you do not use Japanese,  that command should be commented out.
%
%\renewcommand{\appendixname}
%\renewcommand{\appendixpagename}{Appendix}
%
%%%%%%%%%%%%%%% Setting of Theorem-like environment %%%%%%%%%%%%%%%

%\theoremstyle{definition} % Change the Roman font
\newtheorem{theorem}{Theorem}[section]

\newtheorem{proposition}[theorem]{Proposition}
\newtheorem{lemma}[theorem]{Lemma}

\newtheorem{definition}[theorem]{Definition}
\newtheorem{remark}[theorem]{Remark}

\newtheorem{prop}[theorem]{Proposition}
\newtheorem{lem}[theorem]{Lemma}

%
%
%%%%%%%%%% Proof %%%%%

% ((Use the environment in proo\unknown))
% \begin{proof}
% \end{proof}
% white box in the end of proof
%
%
% make new QED
% \QED{foo}
% we write a black box in the right and output foo after the black box

%

%\DeclareMathOperator{\supp}{supp}
%\DeclareMathOperator{\diam}{diam}
%\DeclareMathOperator{\sgn}{sgn}
%\DeclareMathOperator{\Ker}{Ker}
%\DeclareMathOperator{\Ran}{Ran}
%\DeclareMathOperator{\dist}{dist}
%\DeclareMathOperator{\tr}{tr}
\DeclareMathOperator{\Div}{div}
\DeclareBoldMathCommand{\bu}{u}
\DeclareBoldMathCommand{\bv}{v}
\newcommand{\unknown}{\rho}
\let\unk\unknown
\newcommand{\F}{\mathcal{F}}
\newcommand{\D}{\mathcal{D}}
\newcommand{\N}{\mathbb{N}}
\newcommand{\R}{\mathbb{R}}
%\newcommand{\Cplx}{\mathbb{C}}
%\newcommand{\Sp}{\mathbb{S}}
%\newcommand{\intmean}[1]{\, \rule[3pt]{10pt}{0.3pt}\hspace{-13pt}\int_{#1}}
% integral mean
% \intmean{D} is integral mean for D
%\newcommand{\iintmean}[1]{\, \rule[3pt]{20pt}{0.3pt}\hspace{-22pt}\iint_{#1}}
% double integral mean
% \iintmean{D} is integral mean for D
%
%
%
%
\numberwithin{equation}{section}
%

%%%%%%%%%%%%%%% temporary Package %%%%%%%%%%%%%%%

\usepackage{mathtools}
\mathtoolsset{showonlyrefs=true}

%\usepackage[inline, right]{showlabels}   % Output of cite and reference

%\usepackage[mathlines]{lineno}

%\let\RED\relax

%%%%%%%%%%%%%%% Additional Settings %%%%%%%%%%%%%%%

\usepackage{amsrefs}
%\renewcommand{\refname}{Reference}

%%%%%%%%%%%%%%% title Setting %%%%%%%%%%%%%%%
%
\title{Long-time behavior of free energy in the nonlinear Fokker-Planck equation}
\author{Kouta Araki}
\address{Department of Mathematics,  College of Science and Technology,  Nihon University,  Tokyo
101-8308 JAPAN}
\email{csku24001@g.nihon-u.ac.jp}

\author{Masashi Mizuno}
\address{Department of Mathematics,  College of Science and Technology,  Nihon University,  Tokyo
101-8308 JAPAN}
\email{mizuno.masashi@nihon-u.ac.jp}
%\date{\today}
%
\subjclass[2020]{Primary~35B40, Secondary~35A15, 35A09, 35B09, 35K20, 35K55, 35K65, 35Q84}
\keywords{Nonlinear Fokker-Planck equation; Porous medium equation; Entropy dissipation methods; Long-time asymptotic bahavior}
%%%%%%%%%%%%%%% page style setting %%%%%%%%%%%%%%%
%
\pagestyle{plain}
\allowdisplaybreaks[1]
%
%%%%%%%%%%%%%%% end preamble %%%%%%%%%%%%%%%
%
\begin{document}

% temporary command: DELETE for final version
%\linenumbers

\begin{abstract}
    We study the asymptotic behavior of Fokker-Planck equations with spatially inhomogeneous nonlinear diffusion, based on the energy dissipation law.
    First, we consider the Fokker-Planck equation with porous-medium-type nonlinear diffusion that satisfies the energy dissipation law by introducing spatial inhomogeneity into the free energy.
    We obtain a result on the long-time behavior of the dissipation function for sufficiently large diffusion coefficients by extending the entropy dissipation method to the case of inhomogeneous diffusion.
\end{abstract}

\maketitle

\section{Nonlinear Fokker-Planck model with Inhomogeneous Diffusion}
\label{Inhomogeneous}

Let $\Omega \subset \R^n$ be a bounded convex domain with smooth
boundary in the $n$-dimensional Euclidean space,  $\nu$ be the outer unit
normal vector on $\partial \Omega$.  Let $\alpha>1$ be a constant. We
consider the following initial-boundary value problem for the
nonlinear Fokker-Planck equation.
\begin{equation}
\label{Nonlinear-Fokker-Planck}
\tag{NFP}
\left\{ \, 
 \begin{aligned}
  \frac{\partial \unknown}{\partial t} - \Div(\unknown\nabla(\alpha d(x)\unknown^{\alpha-1}+\phi(x))) &= 0, \quad &x \in \Omega, \quad t>0, \\
  \unknown(0, x) &= \unknown_0(x), \quad &x \in \Omega ,  \\
  \unknown\nabla(\alpha d(x) \unknown^{\alpha-1}+\phi(x)) \cdot \nu&=0, \qquad &x \in \partial \Omega ,  \quad t>0.
 \end{aligned}
\right.
\end{equation}
Here $d,  \phi,  \unknown_0$ are given $C^2$ functions on
$\overline{\Omega}$. We assume that there exists a positive constant
$\Cl{const:D_min}>0$ such that
\begin{equation}
 d(x)\ge \Cr{const:D_min}.
\end{equation}
Assume $\unknown_0=\unknown_0(x)\colon\overline{\Omega}\rightarrow \R$
be a given positive probability density function on $\overline{\Omega}$, 
namely
\begin{equation}
 \label{eq:f_0 is pdf}
  \int_\Omega \unknown_0\, dx=1.
\end{equation}
If $d$ is a positive constant,  then
\begin{equation}
 \label{porous medium}
  \Div(\unknown\nabla(\alpha d \unknown^{\alpha-1}))=(\alpha-1)d\Delta \unknown^{\alpha}=\Div((\alpha-1)d\nabla\unknown^\alpha)
\end{equation}
is valid. Thus,  \eqref{Nonlinear-Fokker-Planck} is widely known as a
drift-diffusion equation with porous medium type diffusion. On the other
hand,  if $d$ is not a constant,  all three terms in \eqref{porous medium}
are different. Why do we consider \eqref{Nonlinear-Fokker-Planck}? We
first explain the motivation to study \eqref{Nonlinear-Fokker-Planck}.

\subsection{Energy dissipation law with linear diffusion}
When $d$ is a positive constant, the Fokker-Planck equation of the form
\begin{equation}
 \label{eq:1.linear-Fokker-Planck}
 \frac{\partial\unknown}{\partial t}
  -d\Delta\unknown
  -
  \Div(\unknown\nabla\phi(x))
  =
  0
\end{equation}
is related to the following stochastic differential equation
\begin{equation}
 \label{eq:1.SDE}
 dX=-\nabla\phi(X)\, dt+d\, dB, 
\end{equation}
where $B$ is a Brownian motion. Precisely,  if $\{X_t\}_{t>0}$ is a
solution of \eqref{eq:1.SDE},  then associated stochastic density
function $\rho$ satisfies \eqref{eq:1.linear-Fokker-Planck} in
distribution sense. Note that \eqref{eq:1.linear-Fokker-Planck} can be written
as
\begin{equation}
 \label{eq:1.ContinuityEquation-LinearHomogeneousDiffusion}
 \frac{\partial\unknown}{\partial t}
  -
  \Div(\unknown\nabla(d\log\unknown+\phi(x)))
  =
  0,
\end{equation}
hence we obtain the energy dissipation law
\begin{equation}
 \label{eq:1.EnergyDissipationLaw-LinearHomogeneousDiffusion}
 \frac{d}{dt}
  \int_{\Omega}
  (d(\log\unknown-1)+\phi(x))\unknown\, dx
  =
  -
  \int_\Omega
  |\nabla(d\log\unknown+\phi(x))|^2\unknown
  \, dx
\end{equation}
for solutions $\rho$ of \eqref{eq:1.linear-Fokker-Planck} subjected to
the natural boundary condition
\begin{equation}
    \unknown\nabla(d\log\unknown+\phi(x))\cdot\nu=0\ \text{on}\ \partial\Omega.    
\end{equation}

Next,  we look at the spatial inhomogeneity of the diffusion. We return
to the stochastic differential equation \eqref{eq:1.SDE} with spatially
variable diffusion, namely
\begin{equation}
 \label{eq:1.SDE-VariableDiffusion}
 dX=-\nabla\phi(X)\, dt+d(X) dB.
\end{equation}
Then,  we need to specify the stochastic integration to determine
\eqref{eq:1.SDE-VariableDiffusion}. For instance,  if we choose It\={o}'s
integral,  then the associated Fokker-Planck equation is
\begin{equation}
 \label{eq:1.linear-Fokker-Planck-SDE}
  \frac{\partial\unknown}{\partial t}
  -\Delta(d(x)\unknown)
  -
  \Div(\unknown\nabla\phi(x))
  =
  0.
\end{equation}
Compare to \eqref{eq:1.linear-Fokker-Planck},  it is not easy to find
the energy dissipation law
\eqref{eq:1.EnergyDissipationLaw-LinearHomogeneousDiffusion} for
\eqref{eq:1.linear-Fokker-Planck-SDE}. As in
\eqref{eq:1.ContinuityEquation-LinearHomogeneousDiffusion},  we can
rewrite \eqref{eq:1.linear-Fokker-Planck-SDE} as
\begin{equation}
 \frac{\partial\unknown}{\partial t}
  -
  \Div(\unknown
  (d(x)\nabla\log\unknown+\nabla d(x)+\nabla\phi(x))
  )
  =
  0, 
\end{equation}
and one can find that the velocity vector
$-d(x)\nabla\log\unknown-\nabla d(x)-\nabla\phi(x)$ does not have a
scalar potential function in general. Note that we can formulate other
equations of the \eqref{eq:1.SDE-VariableDiffusion} from other
stochastic integrals (for instance,  Stratonovich's integral), but similar
difficulties occur for any stochastic integral.

Our idea to guarantee the energy dissipation law with spatial inhomogeneity
is not to start with a stochastic differential equation \eqref{eq:1.SDE} but
\eqref{eq:1.EnergyDissipationLaw-LinearHomogeneousDiffusion} with
the spatial inhomogeneity. Let us consider
\begin{equation}
 \label{eq:1.EnergyDissipationLaw-LinearInhomogeneousDiffusion}
 \frac{d}{dt}
  \int_{\Omega}
  (d(x)(\log\unknown-1)+\phi(x))\unknown\, dx
  =
  -
  \int_\Omega
  |\nabla(d(x)\log\unknown+\phi(x))|^2\unknown
  \, dx.
\end{equation}
Since $\rho$ is a probability density function,  we consider the equation
of continuity
\begin{equation}
 \label{eq:1.ContinuityEquation}
 \frac{\partial\unknown}{\partial t}
  +
  \Div(\unknown\vec{v})
  =
  0
\end{equation}
where $\vec{v}$ is a velocity vector. Plugging
\eqref{eq:1.ContinuityEquation} into
\eqref{eq:1.EnergyDissipationLaw-LinearInhomogeneousDiffusion},  we
obtain
\begin{equation}
 \int_\Omega
  \nabla(d(x)\log\unknown+\phi(x))\cdot\vec{v}
  \unknown
  \, dx
  =
  -
  \int_\Omega
  |\nabla(d(x)\log\unknown+\phi(x))|^2\unknown
  \, dx.
\end{equation}
Thus,  we find $\vec{v}=-\nabla(d(x)\log\unknown+\phi(x))$ in order to
guarantee energy dissipation law
\eqref{eq:1.EnergyDissipationLaw-LinearInhomogeneousDiffusion}. Plugging
$\vec{v}$ into the equation of continuity
\eqref{eq:1.ContinuityEquation},  we obtain
\begin{equation}
 \label{eq:1.InhomFP-LinDiff}
 \frac{\partial\unknown}{\partial t}
  +
  \Div(
  \unknown
  (\nabla(d(x)\log\unknown+\phi(x)))
  )
  =
  0.
\end{equation}

\subsection{Energy dissipation law with nonlinear diffusion}

We are in replacing the linear diffusion $\Delta\unknown$
\eqref{eq:1.linear-Fokker-Planck} to the nonlinear diffusion
$\Delta\unknown^\alpha$ of the porous medium type (cf. \cite{MR2286292}). 
Let us consider the
energy dissipation law with the free energy including the spatial
inhomogeneity of the form
\begin{equation}
 \label{eq:1.EnergyDissipationLaw-NLInhomogeneousDiffusion}
  \begin{split}
   \frac{d}{dt}\F[\unknown](t)
   &=
   -
   \D[\unknown](t), 
   \\
   \F[\unknown](t)
   &:=
   \int_\Omega
   (\alpha d(x)\unknown^{\alpha-1}+\phi(x))\unknown\, dx, 
   \\
   \D[\unknown](t)
   &:=
   \int_\Omega
   |\nabla(\alpha d(x)\unknown^{\alpha-1}+\phi(x))|^2
   \unknown\, dx.
  \end{split}
\end{equation}
As the same argument,  we plug \eqref{eq:1.ContinuityEquation} into
\eqref{eq:1.EnergyDissipationLaw-NLInhomogeneousDiffusion} and obtain
\begin{equation}
 \int_\Omega
  \nabla(\alpha d(x)\unknown^{\alpha-1}+\phi(x))\cdot
  \vec{v}\unknown\, dx
 =
 -
 \int_\Omega
 |\nabla(\alpha d(x)\unknown^{\alpha-1}+\phi(x))|^2
 \unknown\, dx.
\end{equation}
In order to guarantee the energy dissipation law
\eqref{eq:1.EnergyDissipationLaw-NLInhomogeneousDiffusion},  we take
\begin{equation}
    \vec{v}=-\nabla(\alpha d(x)\unknown^{\alpha-1}+\phi(x)).    
\end{equation}
Plugging
$\vec{v}$ into the equation of continuity
\eqref{eq:1.ContinuityEquation},  we obtain the first equation of
\eqref{Nonlinear-Fokker-Planck}.
Note that for the case of homogeneous diffusion, \cite{MR1842429} gave 
a physical derivation of the porous medium equation, similar to this argument.

\subsection{Properties of the Nonlinear Fokker-Planck equation}
Let
\begin{equation}
 \label{mu}
  \mu:=\alpha d(x)\unknown^{\alpha-1}+\phi(x).
\end{equation}
Then $\vec{v}=-\nabla\mu$ and \eqref{Nonlinear-Fokker-Planck} can be
rewritten as
\begin{equation}
 \label{continuity_eq}
  \frac{\partial \unknown}{\partial t}-\Div(\unknown\nabla\mu)=0.
\end{equation}

We first give a notion of solutions of \eqref{Nonlinear-Fokker-Planck}.

\begin{definition}
 $C^2$ positive function $\unknown$ on $\overline{\Omega}$ is a
 classical solution of \eqref{Nonlinear-Fokker-Planck} if $\unknown$
 satisfies \eqref{Nonlinear-Fokker-Planck} in classical sense.
\end{definition}

Since \eqref{Nonlinear-Fokker-Planck} comes from the equation of
continuity,  we can show the conservation of mass.

\begin{lemma}
 \label{lem;f is PDF}
 Let $\unknown_0$ be a positive probability density
 function on $\overline{\Omega}$ and let $\unknown$ be a positive
 classical solution of \eqref{Nonlinear-Fokker-Planck}. Then,  for any
 $t>0$
 \begin{equation}
  \label{eq; f_is_PDF}
   \int_\Omega \unknown(x, t)\, dx=1.
 \end{equation}
\end{lemma}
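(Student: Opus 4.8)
The plan is to show that the total mass $m(t):=\int_\Omega\unknown(x,t)\,dx$ is constant in time by differentiating it and exploiting the divergence structure of the equation together with the no-flux boundary condition.

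First I would differentiate under the integral sign. Since $\unknown$ is a positive classical solution, both $\unknown$ and $\partial_t\unknown$ are continuous on $\overline{\Omega}\times[0,\infty)$, and $\Omega$ is bounded, so for every $t>0$
\begin{equation}
  \frac{d}{dt}m(t)=\int_\Omega\frac{\partial\unknown}{\partial t}(x,t)\,dx.
\end{equation}
Next I would invoke \eqref{continuity_eq}, i.e.\ $\partial_t\unknown=\Div(\unknown\nabla\mu)$ with $\mu$ given by \eqref{mu}, and apply the divergence theorem; this is legitimate because $\partial\Omega$ is smooth and $\unknown\nabla\mu$ is $C^1$ up to the boundary. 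This yields
\begin{equation}
  \frac{d}{dt}m(t)=\int_\Omega\Div(\unknown\nabla\mu)\,dx=\int_{\partial\Omega}\unknown\nabla\mu\cdot\nu\,dS.
\end{equation}
The last integral vanishes by the third line of \eqref{Nonlinear-Fokker-Planck}, namely $\unknown\nabla\mu\cdot\nu=\unknown\nabla(\alpha d(x)\unknown^{\alpha-1}+\phi(x))\cdot\nu=0$ on $\partial\Omega$. Hence $m'(t)=0$ for all $t>0$, and integrating in time together with $m(0)=\int_\Omega\unknown_0\,dx=1$ from \eqref{eq:f_0 is pdf} gives \eqref{eq; f_is_PDF}.

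This is a short direct computation rather than anything deep; the only points requiring (routine) justification are the interchange of differentiation and integration and the use of the divergence theorem, both guaranteed by the assumed $C^2$ regularity of $\unknown$ on $\overline{\Omega}$ and the smoothness of $\partial\Omega$. If anything plays the role of the main obstacle, it is merely checking that the boundary flux produced by the divergence theorem coincides exactly with the no-flux term imposed in \eqref{Nonlinear-Fokker-Planck}: since $\vec{v}=-\nabla\mu$, the flux $\unknown\vec{v}=-\unknown\nabla\mu$ is precisely the quantity assumed to vanish on $\partial\Omega$.
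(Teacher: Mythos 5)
Your argument is correct and coincides with the paper's own proof: differentiate the mass, substitute $\unknown_t=\Div(\unknown\nabla\mu)$, apply the divergence theorem, and use the no-flux boundary condition to kill the boundary term. The extra remarks on interchanging differentiation and integration are routine justifications the paper leaves implicit.
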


\begin{proof}
 By the integration by parts together with
 \eqref{Nonlinear-Fokker-Planck},  we obtain
 \begin{equation*}
  \frac{d}{dt}\int_\Omega \unknown\, dx=\int_\Omega \unknown_t\, dx
   =\int_\Omega \Div(\unknown\nabla\mu)\, dx
   =\int_{\partial \Omega}\unknown \nabla\mu\cdot\nu\, d\sigma
   =0.
 \end{equation*}
 This follows
 \begin{equation*}
  \int_\Omega \unknown(x, t)\, dx=\int_\Omega \unknown_0(x)\, dx=1.
 \end{equation*}
\end{proof}

Next,  recall that $\F$ can be written as
\begin{equation}
 \label{free energy}
  \F[\unknown](t)=\int_\Omega (d(x)\unknown^\alpha+\unknown\phi(x))\, dx.
\end{equation}
Then,  we can establish the energy dissipation law for
\eqref{Nonlinear-Fokker-Planck}.

\begin{proposition}
 \label{prop;diff_type_energy_law}
 Let $\unknown_0$ be a positive probability density function on
 $\overline{\Omega}$ and let $\unknown$ be a positive classical solution
 of \eqref{Nonlinear-Fokker-Planck}. Let $\F$ be the free energy defined
 as \eqref{free energy}. Then,  for any $t>0$
 \begin{equation}
  \label{prop;diff_type_energy_law1}
   \frac{d}{dt}\F[\unknown](t)=-\int_\Omega |\nabla\mu|^2\unknown\, dx\le 0.
 \end{equation}
\end{proposition}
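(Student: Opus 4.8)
The plan is to differentiate the free energy \eqref{free energy} directly, insert the equation in its continuity form \eqref{continuity_eq}, and integrate by parts using the natural boundary condition. First I would justify interchanging the time derivative with the spatial integral: since $\unknown$ is a positive $C^2$ function on the compact set $\overline{\Omega}$, both $d(x)\unknown^\alpha+\unknown\phi(x)$ and its time derivative are continuous on $\overline{\Omega}\times(0,\infty)$ and locally bounded in $t$, so differentiation under the integral sign is legitimate. Because $d$ and $\phi$ do not depend on $t$, this gives
\begin{equation*}
 \frac{d}{dt}\F[\unknown](t)
 =
 \int_\Omega \partial_t\bigl(d(x)\unknown^\alpha+\unknown\phi(x)\bigr)\, dx
 =
 \int_\Omega \bigl(\alpha d(x)\unknown^{\alpha-1}+\phi(x)\bigr)\unknown_t\, dx
 =
 \int_\Omega \mu\,\unknown_t\, dx,
\end{equation*}
where the last equality uses the definition \eqref{mu} of $\mu$.

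Next I would substitute $\unknown_t=\Div(\unknown\nabla\mu)$ from \eqref{continuity_eq} and apply the divergence theorem on the convex domain $\Omega$ with smooth boundary:
\begin{equation*}
 \frac{d}{dt}\F[\unknown](t)
 =
 \int_\Omega \mu\,\Div(\unknown\nabla\mu)\, dx
 =
 \int_{\partial\Omega}\mu\,\unknown\nabla\mu\cdot\nu\, d\sigma
 -
 \int_\Omega \nabla\mu\cdot(\unknown\nabla\mu)\, dx.
\end{equation*}
The boundary integral vanishes identically because the flux $\unknown\nabla\mu\cdot\nu=\unknown\nabla(\alpha d(x)\unknown^{\alpha-1}+\phi(x))\cdot\nu$ is zero on $\partial\Omega$ by the third line of \eqref{Nonlinear-Fokker-Planck}. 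Hence $\frac{d}{dt}\F[\unknown](t)=-\int_\Omega|\nabla\mu|^2\unknown\, dx$, and since $\unknown>0$ the integrand is nonnegative, which yields the inequality in \eqref{prop;diff_type_energy_law1}.

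I do not anticipate a genuine obstacle here: the argument is a one-step integration by parts, and the only place deserving a sentence of care is the interchange of $d/dt$ and $\int_\Omega$, which is automatic for classical solutions on a bounded domain. (The same computation, read for the original potential form $\mu=\alpha d(x)\unknown^{\alpha-1}+\phi(x)$ rather than the rewritten energy \eqref{free energy}, also makes transparent why the velocity field $\vec v=-\nabla\mu$ was chosen in the derivation: it is precisely the choice that turns the cross term into a negative square.) The quantitative exploitation of this identity — controlling $\D[\unknown]$ for large $t$ — is where the real work of the paper lies.
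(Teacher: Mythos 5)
Your proof is correct and follows essentially the same route as the paper's: differentiate under the integral to get $\int_\Omega \mu\,\unknown_t\,dx$, substitute the equation in divergence form, and integrate by parts using the no-flux boundary condition to produce $-\int_\Omega|\nabla\mu|^2\unknown\,dx$. The only difference is that you explicitly justify the interchange of $d/dt$ with the integral, which the paper leaves implicit.
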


\begin{proof}
 Consider to time-derivative of $\F$,  then we obtain
  \begin{equation*}
   \begin{split}
    \frac{d}{dt}\F[\unknown](t)&=\frac{d}{dt}\int_\Omega(d(x)\unknown^\alpha+\unknown\phi(x))\, dx \\
    &=\int_\Omega \frac{\partial}{\partial t}(d(x)\unknown^\alpha+\unknown\phi(x))\, dx\\
    &=\int_\Omega \unknown_t(\alpha d(x)\unknown^{\alpha-1}+\phi(x))\, dx.
   \end{split}
  \end{equation*}
 Plugging \eqref{Nonlinear-Fokker-Planck} to $\rho_t$ with \eqref{mu}, 
 we have
 \begin{equation*}
  \int_\Omega \unknown_t(\alpha d(x)\unknown^{\alpha-1}+\phi(x))\, dx
   =
   \int_\Omega \Div(\unknown\nabla\mu)\mu\, dx
 \end{equation*}
 Then,  integration by parts with the boundary condition
 \eqref{Nonlinear-Fokker-Planck} deduce
 \begin{equation*}
  \int_\Omega \Div(\unknown\nabla\mu)\mu\, dx
   =\int_\Omega \Div(\unknown (\nabla\mu) \mu)\, dx
   -
   \int_\Omega |\nabla\mu|^2 \unknown\, dx 
   =
   -\int_\Omega |\nabla\mu|^2\unknown\, dx.
 \end{equation*}
 hence we obtain \eqref{prop;diff_type_energy_law1}.
\end{proof}

\begin{remark}
    The inequality \eqref{prop;diff_type_energy_law1} means that the free energy $\F$ is a Lyapunov functional for solutions of \eqref{Nonlinear-Fokker-Planck}.
    We refer to \cites{MR0760592,MR0760591} to derive a Lyapunov functional for solutions to the self-similar transform of the porous medium equation.
\end{remark}

Integrating both side of \eqref{prop;diff_type_energy_law1} with $t \in
[0, T]$,  the following integral-type energy dissipation law holds;
\begin{equation}
 \label{eq;integral-type-dissipation_law}
  \F[\unknown](t)
  +
  \int_0^T \int_\Omega |\nabla\mu|^2\unknown\, dxdt
  =
  \F[\unknown_0].
\end{equation}
Since $d, \unknown\ge0$,  
\begin{equation*}
 \F[\unknown](t)=\int_\Omega (d(x)\unknown^\alpha+\unknown\phi(x))\, dx
 \ge
 \int_\Omega \unknown\phi(x)\, dx
 \geq
 -\|\phi\|_\infty
\end{equation*}
hence from \eqref{eq;integral-type-dissipation_law},  we find
\begin{equation}
  \int_0^T \int_\Omega |\nabla\mu|^2\unknown\, dxdt
  \le
  \F[\unknown_0]+\|\phi\|_\infty.
\end{equation}

Recall that $\D$ can write by using $\mu$ as
\begin{equation}
 \D[\rho](t) := \int_\Omega|\nabla\mu|^2\unknown\, dx.
\end{equation}
From \eqref{eq;integral-type-dissipation_law},  we can show asymptotic
behavior of $\D$ sequentially in time.

\begin{lemma}
 \label{lem;pointwise} Let $\unknown_0$ be a positive probability
 density function on $\overline{\Omega}$.  Let $\unknown$ be a positive
 global-in-time classical solution of
 \eqref{Nonlinear-Fokker-Planck}. Assume $\F[\rho_0]<\infty$.
 Then,  there is an increasing sequence
 $\{t_j\}_{j \in \N}$,  such that $t_j \rightarrow \infty$ and
 \begin{equation}
  \label{eq;pointwise}
   \int_\Omega |\nabla\mu|^2\unknown\, dx \rightarrow 0, \quad j\rightarrow\infty
 \end{equation}
\end{lemma}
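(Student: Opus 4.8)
The plan is to exploit the integral-type energy dissipation law together with the nonnegativity of the dissipation functional $\D$. First I would integrate the pointwise law \eqref{prop;diff_type_energy_law1} over $[0,T]$ to get \eqref{eq;integral-type-dissipation_law}, and then combine it with the lower bound $\F[\unknown](T)\ge -\|\phi\|_\infty$, which follows from $d\ge 0$, $\unknown\ge 0$ and $\phi\in C^2(\overline{\Omega})$. This gives the \emph{uniform-in-$T$} estimate
\begin{equation*}
 \int_0^T\int_\Omega|\nabla\mu|^2\unknown\, dx\, dt
 =
 \int_0^T\D[\unknown](t)\, dt
 \le
 \F[\unknown_0]+\|\phi\|_\infty.
\end{equation*}
Since $\F[\unknown_0]<\infty$ by hypothesis and the right-hand side does not depend on $T$, letting $T\to\infty$ yields $\int_0^\infty\D[\unknown](t)\, dt<\infty$.

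Next I would invoke the elementary principle that a nonnegative function on $[0,\infty)$ with finite integral has vanishing $\liminf$ at infinity. Suppose, for contradiction, that $\liminf_{t\to\infty}\D[\unknown](t)=2\delta>0$. Then there is $T_0>0$ with $\D[\unknown](t)\ge\delta$ for all $t\ge T_0$, whence $\int_{T_0}^\infty\D[\unknown](t)\, dt=\infty$, contradicting the finiteness established above. Therefore $\liminf_{t\to\infty}\D[\unknown](t)=0$, and by the definition of $\liminf$ one can extract an increasing sequence $\{t_j\}_{j\in\N}$ with $t_j\to\infty$ along which $\D[\unknown](t_j)\to 0$; since $\D[\unknown](t_j)=\int_\Omega|\nabla\mu|^2\unknown\, dx$ evaluated at $t=t_j$, this is exactly \eqref{eq;pointwise}.

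I do not expect any serious obstacle: because $\unknown$ is a $C^2$ classical solution, the map $t\mapsto\D[\unknown](t)$ is continuous, so no measurability subtleties arise and the contradiction argument applies verbatim. The only point that deserves care is verifying that the bound on $\int_0^T\D[\unknown](t)\, dt$ is genuinely independent of $T$ — this is precisely where the sign conditions $d\ge 0$, $\unknown\ge 0$ and the boundedness $\|\phi\|_\infty<\infty$ enter, via the one-sided bound $\F[\unknown](T)\ge-\|\phi\|_\infty$ together with the monotonicity of $\F$ along the flow.
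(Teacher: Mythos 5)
Your proposal is correct and follows essentially the same route as the paper: the paper likewise combines the integral-type dissipation law \eqref{eq;integral-type-dissipation_law} with the bound $\F[\unknown](t)\ge-\|\phi\|_\infty$ to get $\int_0^\infty\D[\unknown](t)\,dt<\infty$ and then extracts the sequence. Your explicit $\liminf$/contradiction step merely fills in a detail the paper leaves implicit.
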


\begin{proof}
 From \eqref{eq;integral-type-dissipation_law} and $F[\unknown_0]<\infty$,  we have
 \begin{equation*}
  \int_0^\infty \int_\Omega |\nabla \mu|^2\unk \,  dxdt 
   \leq
   \F[\rho_0]+\|\phi\|_\infty
   <\infty.
 \end{equation*}
 Thus,  there is an increasing sequence $\{t_j\}_{j \in \N}$, such that
 $t_j \rightarrow \infty $,  and
 \begin{equation*}
  \int_\Omega |\nabla \mu|^2\unknown\, dx \rightarrow 0, \quad j \rightarrow \infty.
 \end{equation*}
\end{proof}

From Lemma \ref{lem;pointwise}, we raise the following problem. Can we
show the full convergence of the dissipation function $\D$ in time,  namely
\begin{equation}
 \label{full_limit}
  \D[\rho](t)
  =
  \int_\Omega
  |\nabla\mu|^2\unknown\, dx
  \rightarrow 0
\end{equation}
as $t\rightarrow\infty$? This question is related to the long-time behavior
of $\unknown$ to the equilibrium state. From \eqref{full_limit},  we can
expect $\nabla\mu\rightarrow0$ as $t\rightarrow\infty$. Then, the solution
$\unknown$ may converge to the equilibrium state $\unknown_\infty$, 
which satisfies
\begin{equation}
 \alpha d(x)\unknown_\infty(x)^{\alpha-1}+\phi(x)
  =\Cl{const:equilibrium}.
\end{equation}
Here $\unknown_\infty$ is determined by a constant
$\Cr{const:equilibrium}$ to be a probability density function. We are
interested in the long-time behavior of the solution $\unk$ of
\eqref{Nonlinear-Fokker-Planck} to the equilibrium state $\unk_\infty$. 

\subsection{Known results}

When $d$ is constant, and $\phi$ is a strongly convex function,  we can
employ the entropy dissipation method
\cites{MR1853037, MR1777035, MR3497125, MR1842429}. The main idea of the entropy
dissipation method is to compute the second time derivative of
$\F[\rho]$ and show
\begin{equation}
 \frac{d^2}{dt^2}\F[\unk](t)
  =
  -\frac{d}{dt}\D[\unk](t)
  \geq
  \Cl{const:ConstDiff-Carrillo}
  \D[\unk](t)
\end{equation}
for some positive constant $\Cr{const:ConstDiff-Carrillo}>0$. Then we
obtain exponential decay of $\D[\rho]$ by the Gronwall theorem. Applying
the Csisz\'{a}r-Kullback-Pinsker inequality to show the long-time
asymptotic behavior in $L^1$ space. 

When $d$ is not constant,  to the best of our knowledge,  there is no
result about long-time asymptotic behavior for
\eqref{Nonlinear-Fokker-Planck}. There are a few results about the study
of long-time asymptotics with the variable diffusion coefficient in
\cite{MR1853037}; however, the problem is completely different from the
model \eqref{Nonlinear-Fokker-Planck}. We mention the recent study by
\cites{MR4547562, 
MR4506846, MR4976469, epshteyn2025longtimeasymptoticbehaviornonlinear}. In
these papers,  one considered free energy,  dissipation function,  and the
energy dissipation law of the form
\eqref{eq:1.EnergyDissipationLaw-LinearHomogeneousDiffusion}. To ensure
the energy dissipation,  we may deduce \eqref{eq:1.InhomFP-LinDiff}.
Long-time asymptotics of \eqref{eq:1.InhomFP-LinDiff} subjected to
the periodic boundary condition were studied by
\cites{MR4506846, epshteyn2025longtimeasymptoticbehaviornonlinear}, and
well-posedness of \eqref{eq:1.InhomFP-LinDiff} was studied by
\cites{MR4976469, MR4547562}. Note that these works are related to the
study of the stochastic model of grain boundary motion \cite{MR4526584}.

The problem \eqref{Nonlinear-Fokker-Planck} is quite a different setting
in contrast with the previous study \eqref{eq:1.InhomFP-LinDiff}. First, 
the energy dissipation law with the free energy $\F$ defined as
\eqref{eq:1.EnergyDissipationLaw-NLInhomogeneousDiffusion} deduces the
nonlinear diffusion,  in contrast with the linear diffusion
\eqref{eq:1.InhomFP-LinDiff}. Further, we consider the Neumann boundary
condition in \eqref{Nonlinear-Fokker-Planck},  compare with the periodic
boundary condition in
\cites{MR4506846, epshteyn2025longtimeasymptoticbehaviornonlinear}.

\subsection{Main Theorem}

Here we state the main theorem.

\begin{theorem}
\label{theorem}
    Let $n=1, 2, 3$. Let $\unknown$ be a bounded strictly positive global-in-time classical solution of \eqref{Nonlinear-Fokker-Planck} on $\overline{\Omega}$,  namely there are positive constants $\Cl{const:f>c}$ and $\Cl{const:f<c}>0$ such that
 \begin{equation}
  \label{eq;apripri}
   \Cr{const:f>c} \le \unknown(x, t) \le \Cr{const:f<c}
 \end{equation}
 for all $x \in \overline{\Omega}$ and $t>0$. Assume that there is a
 positive constant $\lambda>0$ such that $\nabla^2\phi \ge \lambda I$, 
 where $I$ is the identity matrix and $\nabla^2\phi$ is Hesse matrix of
 $\phi$. In addition,  assume that $\nabla d$,  $\nabla \phi$ are bounded
 on $\overline{\Omega}$. Then,  there are positive constants
 $\Cr{const:D_min}$,  $\Cl{const:Initial_Energy}$,  
 $\Cl{const:Dissipation_Decay}>0$ depending only on $n$, $\lambda$, 
 $\Omega$, $\alpha$, $\|\nabla d\|_{L^\infty(\Omega)}$, $\|\nabla
 \phi\|_{L^\infty(\Omega)}$, $\Cr{const:f>c}$,  $\Cr{const:f<c}$ such
 that if two conditions
    \begin{equation}
     \label{eq:1.TheoremAssumption}
     \min_{x \in \overline{\Omega}}d(x)\ge \Cr{const:D_min}, \quad 
      \D[\unk_0]
      =
      \int_\Omega |\nabla \mu(x, 0)|^2\unknown_0\, dx
      \le \Cr{const:Initial_Energy}
    \end{equation}
    hold,  then
    \begin{equation}
    \label{exp}
     \D[\unk](t)
     =
     \int_\Omega |\nabla\mu|^2 \unknown\, dx \le \Cr{const:Dissipation_Decay}e^{-\lambda t},  \qquad t>0.
    \end{equation}
\end{theorem}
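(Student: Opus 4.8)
The plan is to run the entropy dissipation (Bakry--Émery) method for \eqref{Nonlinear-Fokker-Planck}, treating the variable diffusion coefficient $d(x)$ as a perturbation that is controlled by the largeness of $\min d$, and to close the argument by a continuity (bootstrap) argument exploiting the smallness of $\D[\unk_0]$. First I would differentiate $\D[\unk](t)=\int_\Omega|\nabla\mu|^2\unk\,dx$ in time: using $\unk_t=\Div(\unk\nabla\mu)$, the identity $\mu_t=\alpha(\alpha-1)d(x)\unk^{\alpha-2}\unk_t$ (valid since $d,\phi$ do not depend on $t$), and two integrations by parts whose boundary integrals all vanish because $\nabla\mu\cdot\nu=0$ on $\partial\Omega$ and $\unk>0$, I expect the representation
\begin{equation*}
 -\frac{d}{dt}\D[\unk](t)
 = 2\int_\Omega \alpha(\alpha-1)d(x)\,\unk^{\alpha-2}\unk_t^2\,dx
 + 2\int_\Omega \unk\,\nabla^2\mu(\nabla\mu,\nabla\mu)\,dx .
\end{equation*}
The first term is nonnegative; the second is a priori sign indefinite and must be controlled.

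To this end I would write $\nabla^2\mu=\nabla^2\phi+\alpha\nabla^2(d\unk^{\alpha-1})$. The $\nabla^2\phi$-part gives $2\int_\Omega\unk\,\nabla^2\phi(\nabla\mu,\nabla\mu)\,dx\ge2\lambda\D[\unk](t)$ by the convexity hypothesis on $\phi$, while the remaining part I would recombine with the nonnegative term above through the Bochner identity $\tfrac12\Delta|\nabla\mu|^2=|\nabla^2\mu|^2+\nabla\mu\cdot\nabla\Delta\mu$ integrated against $\unk\,dx$; since $\Omega$ is convex and $\partial_\nu\mu=0$, the resulting boundary integral carries the nonnegative second fundamental form and can be dropped, and the elementary identity $\nabla\unk^{\alpha-1}=\tfrac1{\alpha d}(\nabla\mu-\nabla\phi)-\tfrac{\unk^{\alpha-1}}{d}\nabla d$ (which follows from $\alpha d\unk^{\alpha-1}=\mu-\phi$) lets me express all first derivatives of $\unk$ through $\nabla\mu,\nabla\phi,\nabla d$. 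As in the homogeneous porous-medium case, this should produce
\begin{equation*}
 -\frac{d}{dt}\D[\unk](t)\;\ge\;2\lambda\,\D[\unk](t)+c_0(\min d)\int_\Omega|\nabla^2\mu|^2\,dx-\mathcal R ,
\end{equation*}
where $c_0>0$ depends only on $\alpha,\Cr{const:f>c},\Cr{const:f<c}$ and the remainder $\mathcal R$ gathers every term carrying a factor $\nabla d$ or $\nabla^2 d$, or that is cubic or quartic in $\nabla\mu$; the $\nabla^2 d$-contributions I would integrate by parts once more so that only $\|\nabla d\|_{L^\infty(\Omega)}$ (not $\|\nabla^2 d\|_{L^\infty(\Omega)}$) enters, as the hypotheses demand.

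Next I would bound $\mathcal R$ by Young's inequality together with Sobolev embedding and Gagliardo--Nirenberg interpolation, which for $n\le3$ control $\int_\Omega(|\nabla\mu|^3+|\nabla\mu|^4)\,dx$ by $\varepsilon\int_\Omega|\nabla^2\mu|^2\,dx+C\D[\unk](t)^{1+\delta}$ for some $\delta>0$, after using \eqref{eq;apripri} (so that $\|\nabla\mu\|_{L^2(\Omega)}^2\lesssim\D[\unk](t)$) and Poincaré's inequality for $\mu-\bar\mu$; the terms linear in both $\nabla d$ and $\nabla^2\mu$ are bounded by $\varepsilon(\min d)\int_\Omega|\nabla^2\mu|^2\,dx+C\|\nabla d\|_{L^\infty(\Omega)}^2(\min d)^{-1}\D[\unk](t)$. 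Absorbing the $\varepsilon$-terms into $c_0(\min d)\int_\Omega|\nabla^2\mu|^2\,dx$ yields
\begin{equation*}
 -\frac{d}{dt}\D[\unk](t)\;\ge\;\Big(2\lambda-C_1\tfrac{\|\nabla d\|_{L^\infty(\Omega)}^2}{\min d}-C_2\,\D[\unk](t)^{\delta}\Big)\D[\unk](t),
\end{equation*}
with $C_1,C_2$ depending only on the data listed in the theorem. I would then fix $\Cr{const:D_min}$ so large that $C_1\|\nabla d\|_{L^\infty(\Omega)}^2/\Cr{const:D_min}\le\lambda/2$ and $\Cr{const:Initial_Energy}$ so small that $C_2\Cr{const:Initial_Energy}^{\delta}\le\lambda/2$: on any interval $[0,T]$ with $\D[\unk](t)\le\Cr{const:Initial_Energy}$ the above gives $-\tfrac{d}{dt}\D[\unk]\ge\lambda\D[\unk]$, hence $\D[\unk](t)\le\D[\unk_0]e^{-\lambda t}\le\Cr{const:Initial_Energy}$, and since $\D[\unk]$ is continuous with $\D[\unk_0]\le\Cr{const:Initial_Energy}$ a standard continuity argument extends this to all $t>0$; taking $\Cr{const:Dissipation_Decay}:=\D[\unk_0]$ gives \eqref{exp}.

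The hard part will be the Bochner computation with $d=d(x)$ sitting inside the nonlinearity: the integrations by parts must be organized so that the surviving second-order quadratic form keeps a coefficient of genuine order $\min d$ (large enough to absorb the $\nabla d$-errors), so that the $\nabla^2 d$-terms are eliminated by one further integration by parts, and so that the convexity of $\Omega$ is exactly what kills the boundary integrals. A secondary difficulty is making the cubic and quartic remainder estimates self-consistent — they involve $\|\nabla^2\mu\|_{L^2(\Omega)}$, which is itself bounded only through the very inequality being proved — and it is here that the restriction $n\le3$ is used.
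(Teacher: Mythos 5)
Your proposal is correct and follows essentially the same route as the paper: compute $-\frac{d}{dt}\D[\unknown](t)$ via the entropy dissipation method, use $\nabla^2\phi\ge\lambda I$ and the convexity of $\Omega$ for the good terms, absorb every $\nabla d$-contribution (after one extra integration by parts to avoid $\nabla^2 d$) into the second-order quadratic form at cost $O(\|\nabla d\|_\infty^2/\min d)$, and control the cubic term in $\nabla\mu$ by a weighted Sobolev--Poincar\'e/interpolation inequality valid for $n\le 3$, which is exactly where the superlinear powers of $\D[\unknown](t)$ and the smallness of $\D[\unknown_0]$ enter. The only genuine deviation is at the end: you close with a continuity/bootstrap argument keeping $\D[\unknown](t)$ below the smallness threshold, whereas the paper integrates the nonlinear differential inequality explicitly via a Gronwall-type lemma (substituting $G=e^{\lambda t}g$ and separating variables); both yield \eqref{exp}.
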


Theorem \ref{theorem} says that even though $\nabla d$ is large,  we obtain exponential
decay of $\D[\rho](t)$ if the diffusion coefficient $d$ is sufficiently
large. Note that if $\nabla d=0$, namely $d$ is constant,  we can take
$\Cr{const:D_min}$ arbitrary positive number. We do not know whether the
assumption \eqref{eq:1.TheoremAssumption},  especially the lower bounds
of $d$, is essential or not. We also mention that the assumption
$n=1, 2, 3$ is used to apply the Sobolev inequality.  

In particular,  from \eqref{exp} we have
\begin{equation*}
 \int_\Omega |\nabla\mu|^2\unknown\, dx\rightarrow 0, \quad t \rightarrow \infty
\end{equation*}
for sufficient large $d(x)$.

We briefly explain the proof of the main theorem. First,  as the
same argument in \cite{MR3497125},  we follow the entropy dissipation
method. Compute the second time derivative of free energy $\F[\rho]$. We
have new terms from the spatial derivative of the diffusion coefficient
$d$. Next,  we treat the integrals of the spatial derivative of $d$. We
have two types of integrals: One has quadratic $\nabla\mu$; the
other has cubic $\nabla\mu$. The integral of quadratic $\nabla\mu$ can
be controlled by the dissipation function and the integral of the second
derivative of $\mu$ by using the H\"older and Young inequalities. To
treat the integral of cubic $\nabla\mu$,  we use the Sobolev-Poincar\'e
inequality and the interpolation inequality. The dimension assumption
$n=1, 2, 3$ is needed to make the interpolation inequality. The assumption
\eqref{eq:1.TheoremAssumption} is to control the opposite coefficient
of the dissipation function.

\subsection{Notation}

Let $\Omega\subset\R^n$ be an open set and let $f\colon\Omega\rightarrow\R$ be
a sufficiently smooth function $f:\Omega\rightarrow\R$. We denote the
gradient of $f$ as
\begin{equation}
 \nabla f
  :=
  \left(
   \frac{\partial f}{\partial x_1}, \frac{\partial f}{\partial x_2}, \dots, \frac{\partial f}{\partial x_n}
   \right).
\end{equation}
We denote the Hesse matrix of $f$ as
\begin{equation}
 \nabla^2f:=
  \begin{pmatrix}
         \frac{\partial^2 f}{\partial x_1^2 } &\cdots &\frac{\partial^2 f}{\partial x_1\partial x_i}& \cdots&\frac{\partial^2 f}{\partial x_1 \partial x_n}\\
         \vdots & \ddots& & &\vdots \\
         \frac{\partial^2 f}{\partial x_i\partial x_1}&\cdots&\frac{\partial^2 f}{\partial x_i^2}&\cdots&\frac{\partial^2 f}{\partial x_i \partial x_n}\\
         \vdots& & &\ddots&\vdots\\
         \frac{\partial^2 f}{\partial x_n\partial x_1}&\cdots
         &\frac{\partial^2 f}{\partial x_n\partial x_i}&\cdots&\frac{\partial^2 f}{\partial x_n^2}
  \end{pmatrix}.
\end{equation}
The Laplacian of $f$ is denoted as
\begin{equation}
 \Delta f
  :=
  \sum_{i=1}^n \frac{\partial^2 f}{\partial x_i^2}.
\end{equation}
For $n$-dimensional symmetric matrices $X, Y$,  we define $X\le Y$ to be
the case that for all $\xi \in \R^n$
\begin{equation*}
 X \xi \cdot \xi \le Y \xi \cdot\xi.
\end{equation*}
We denote $I$ the $n$-dimensional identity matrix. Thus,  for
$n$-dimensional symmetric matrix $X$,  $cI\leq X$ for some $c\in\R$ means
that the eigenvalue of $X$ is equal or greater that $c$.

\section{Proof of main theorem}
\label{proof_of_Main_theorem}
Exponential decay \eqref{exp} is demonstrated by evaluating the second time derivative of $\F$ from below using the dissipation function.
By direct computation, we have
\begin{equation}
    \label{d^2F/dt^2_0}
    \begin{split}
        \frac{d^2}{dt^2}\F[\unknown](t)
        &=
        \frac{d}{dt}\left(-\int_\Omega |\nabla\mu|^2\unknown\, dx\right)
        \\
        &=
        -\int_\Omega |\nabla\mu|^2\unknown_t\, dx-2\int_\Omega (\nabla\mu\cdot\nabla\mu_t)\unknown\, dx.
    \end{split}
\end{equation}

We compute the first term of \eqref{d^2F/dt^2_0} in the right-hand side.

\begin{lem}
    \label{|nablamu|^2f_t}
    Let $\unknown$ be a classical solution of \eqref{Nonlinear-Fokker-Planck} on $\overline{\Omega}\times [0, \infty)$. Then, 
    \begin{equation}
    \label{|nablamu|^2f_t_1}
        \begin{split}
            -\int_{\Omega}|\nabla\mu|^2\unknown_t\, dx
            &=
            2\alpha\int_\Omega (\nabla\mu\cdot \nabla^2(d(x)\unknown^{\alpha-1})\nabla\mu)\unknown\, dx
            \\
            &\quad
            +2\int_\Omega (\nabla\mu\cdot\nabla^2\phi(x)\nabla\mu)\unknown\, dx.
        \end{split}            
    \end{equation}
\end{lem}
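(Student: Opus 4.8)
The plan is to substitute the evolution equation directly and integrate by parts a single time. Using \eqref{continuity_eq}, write $\unknown_t = \Div(\unknown\nabla\mu)$, so that
\[
-\int_\Omega |\nabla\mu|^2\unknown_t\, dx = -\int_\Omega |\nabla\mu|^2 \Div(\unknown\nabla\mu)\, dx .
\]
Integrating by parts moves the divergence onto $|\nabla\mu|^2$, and the boundary integral that appears is $\int_{\partial\Omega}|\nabla\mu|^2\,\unknown\nabla\mu\cdot\nu\, d\sigma$, which vanishes thanks to the Neumann-type boundary condition $\unknown\nabla\mu\cdot\nu = 0$ in \eqref{Nonlinear-Fokker-Planck}. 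This leaves
\[
-\int_\Omega |\nabla\mu|^2\unknown_t\, dx = \int_\Omega \nabla(|\nabla\mu|^2)\cdot\nabla\mu\,\unknown\, dx .
\]

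Next I would use the pointwise identity $\nabla(|\nabla\mu|^2) = 2\,\nabla^2\mu\,\nabla\mu$, which gives $\nabla(|\nabla\mu|^2)\cdot\nabla\mu = 2\,\nabla\mu\cdot\nabla^2\mu\,\nabla\mu$, hence
\[
-\int_\Omega |\nabla\mu|^2\unknown_t\, dx = 2\int_\Omega (\nabla\mu\cdot\nabla^2\mu\,\nabla\mu)\unknown\, dx .
\]
Finally, from the definition $\mu = \alpha d(x)\unknown^{\alpha-1} + \phi(x)$ in \eqref{mu} we have the linear splitting $\nabla^2\mu = \alpha\,\nabla^2(d(x)\unknown^{\alpha-1}) + \nabla^2\phi(x)$; inserting this into the quadratic form and separating the two contributions yields exactly \eqref{|nablamu|^2f_t_1}.

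There is no substantial obstacle in this lemma; the only points that need a word of care are the justification of the integration by parts and the use of the boundary condition to discard the boundary term. Since $\unknown$ is a positive $C^2$ classical solution and $\Omega$ is a bounded convex domain with smooth boundary, $\mu = \alpha d\unknown^{\alpha-1}+\phi$ is $C^2$ up to $\overline{\Omega}$, so all integrands (including $\nabla^2(d\unknown^{\alpha-1})$, which involves second derivatives of $\unknown$) are continuous on $\overline{\Omega}$ and the divergence theorem applies. I would state these regularity remarks briefly and then carry out the three displayed steps above.
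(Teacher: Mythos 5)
Your proposal is correct and follows essentially the same route as the paper: substitute $\unknown_t=\Div(\unknown\nabla\mu)$, integrate by parts using the no-flux boundary condition, apply the identity $\nabla(|\nabla\mu|^2)\cdot\nabla\mu=2(\nabla\mu\cdot\nabla^2\mu\nabla\mu)$, and split $\nabla^2\mu$ via the definition of $\mu$. Your explicit remarks on the vanishing boundary term and the regularity needed for the divergence theorem are welcome but do not change the argument.
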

\begin{proof}
    Using the integration by parts and \eqref{Nonlinear-Fokker-Planck},  we obtain that
\begin{equation*}
   -\int_\Omega |\nabla\mu|^2\unknown_t\, dx=-\int_\Omega |\nabla\mu|^2\Div(\unknown\nabla\mu)\, dx
            =\int_\Omega (\nabla(|\nabla\mu|^2) \cdot\nabla\mu)\unknown\, dx.
\end{equation*}
Next,  we compute $\nabla(|\nabla \mu|^2)\cdot\nabla\mu$. We denote $\nabla\mu=(\mu_{x_1}, \mu_{x_2}, \dots \mu_{x_n})$. Then,  by direct calculation,  we obtain that
    \begin{equation*}
        \begin{split}(\nabla(|\nabla\mu|^2)\cdot\nabla\mu)&=\sum_{i=1}^n\left(\sum_{j=1}^n\mu_{x_j}^2\right)_{x_i}\mu_{x_i} \\
            &=\sum_{i, j=1}^n 2\mu_{x_j}\mu_{x_jx_i}\mu_{x_i}\\
            &=2\sum_{j=1}^n \mu_{x_j}\sum_{i=1}^n \mu_{x_jx_i}\mu_{x_i} \\
            &=2(\nabla\mu\cdot\nabla^2\mu\nabla\mu).
        \end{split}
    \end{equation*}
    Since $\mu=\alpha d(x) \unknown^{\alpha-1}+\phi(x)$,  we obtain \eqref{|nablamu|^2f_t_1}
%    \begin{equation*}
%        -\int_\Omega |\nabla\mu|^2\unknown_t\, dx=2\alpha\int_\Omega(\nabla\mu\cdot\nabla^2(d(x)\unknown^{\alpha-1})\nabla\mu)\unknown\, dx+2\int_\Omega (\nabla\mu\cdot\nabla^2\phi\nabla\mu)\unknown\, dx.
%    \end{equation*}
\end{proof}

Next,  we compute the second term of \eqref{d^2F/dt^2_0} in the right-hand side.

\begin{lem}
    \label{nablamu_t}
    Let $\unknown$ be a classical solution of \eqref{Nonlinear-Fokker-Planck} on $\overline{\Omega}\times [0, \infty)$. Then, 
    \begin{equation}
    \label{nablamu_t1}
        \begin{split}
            -\int_{\Omega}(\nabla\mu\cdot\nabla\mu_t)\unknown\, dx
            &=
            \alpha(\alpha-1)\int_\Omega d(x)(\nabla \unknown\cdot\nabla\mu)^2\unknown^{\alpha-2}\, dx \\
            &\quad+2\alpha(\alpha-1)\int_\Omega d(x)(\nabla \unknown\cdot \nabla\mu)\Delta\mu \unknown^{\alpha-1}\, dx\\
            &\quad+\alpha(\alpha-1)\int_\Omega d(x) (\Delta\mu)^2\unknown^\alpha\, dx.
        \end{split}
    \end{equation}
\end{lem}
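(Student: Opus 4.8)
The plan is to collapse the left-hand side to $\int_\Omega \mu_t\,\unknown_t\,dx$ by a single spatial integration by parts, then use the explicit form of $\mu$ to rewrite this in terms of $d(x)\unknown^{\alpha-2}\unknown_t^2$, and finally expand $\unknown_t=\Div(\unknown\nabla\mu)$ to produce the three stated terms.

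First I would write $(\nabla\mu\cdot\nabla\mu_t)\unknown = \nabla\mu_t\cdot(\unknown\nabla\mu)$ and integrate by parts; the boundary term carries the factor $\unknown\nabla\mu\cdot\nu$, which vanishes on $\partial\Omega$ by the no-flux boundary condition in \eqref{Nonlinear-Fokker-Planck}. Hence
\begin{equation*}
 -\int_\Omega(\nabla\mu\cdot\nabla\mu_t)\unknown\,dx
 =\int_\Omega\mu_t\,\Div(\unknown\nabla\mu)\,dx
 =\int_\Omega\mu_t\,\unknown_t\,dx,
\end{equation*}
where the last equality is \eqref{continuity_eq}. Since $d$ and $\phi$ do not depend on $t$, differentiating $\mu=\alpha d(x)\unknown^{\alpha-1}+\phi(x)$ in time gives $\mu_t=\alpha(\alpha-1)d(x)\unknown^{\alpha-2}\unknown_t$, so the integral becomes $\alpha(\alpha-1)\int_\Omega d(x)\unknown^{\alpha-2}\unknown_t^2\,dx$.

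To finish, I would substitute $\unknown_t=\Div(\unknown\nabla\mu)=\nabla\unknown\cdot\nabla\mu+\unknown\Delta\mu$ and expand the square,
\begin{equation*}
 \unknown_t^2=(\nabla\unknown\cdot\nabla\mu)^2+2\unknown(\nabla\unknown\cdot\nabla\mu)\Delta\mu+\unknown^2(\Delta\mu)^2,
\end{equation*}
absorbing the extra powers of $\unknown$ into $\unknown^{\alpha-2}$ so that the three resulting integrands carry $\unknown^{\alpha-2}$, $\unknown^{\alpha-1}$, $\unknown^{\alpha}$ respectively; this is exactly \eqref{nablamu_t1}.

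There is no genuine difficulty here: the argument is a routine computation. The only points needing a word of care are the vanishing of the boundary term (which is precisely where the no-flux condition enters) and the bookkeeping of the powers of $\unknown$ when expanding $\unknown_t^2$. All manipulations are legitimate because $\unknown$ is a strictly positive classical solution, so the negative powers $\unknown^{\alpha-2}$ arising through $\mu_t$ and the differentiation under the integral sign cause no trouble.
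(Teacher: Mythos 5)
Your proof is correct and follows essentially the same route as the paper: both reduce the left-hand side, via one integration by parts killed by the no-flux condition, to $\alpha(\alpha-1)\int_\Omega d(x)\unknown^{\alpha-2}(\Div(\unknown\nabla\mu))^2\,dx$ and then expand $\Div(\unknown\nabla\mu)=\nabla\unknown\cdot\nabla\mu+\unknown\Delta\mu$. Writing the intermediate quantity as $\int_\Omega\mu_t\,\unknown_t\,dx$ is a tidy reformulation but not a different argument.
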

\begin{proof}
    Since \eqref{mu} and \eqref{Nonlinear-Fokker-Planck},  we obtain that
    \begin{equation}
    \label{eq:nablamu_t}
    \nabla\mu_t=\alpha (\alpha-1)\nabla(d(x)\unknown^{\alpha-2}\unknown_t)=\alpha (\alpha-1)\nabla(d(x)\unknown^{\alpha-2}\Div(\unknown\nabla\mu)).
    \end{equation}
    Using integration by parts together with the boundary condition of \eqref{Nonlinear-Fokker-Planck},  we have
    \begin{equation*}
        \begin{split}
            -\int_\Omega (\nabla\mu\cdot\nabla\mu_t)\unknown\, dx
             &=
             -\alpha(\alpha-1)\int_\Omega (\nabla\mu\cdot\nabla(d(x)\unknown^{\alpha-2}\Div(\unknown\nabla\mu)))\unknown\, dx\\
            &=\alpha(\alpha-1)\int_\Omega d(x)\unknown^{\alpha-2}(\Div(\unknown\nabla\mu))^2\, dx\\
            &=\alpha(\alpha-1)\int_\Omega d(x)(\nabla \unknown\cdot\nabla\mu)^2\unknown^{\alpha-2}\, dx\\
            &\quad
            +2\alpha(\alpha-1)\int_\Omega d(x)(\nabla \unknown\cdot\nabla\mu)\Delta\mu \unknown^{\alpha-1}\, dx\\
            &\quad
            +\alpha(\alpha-1)\int_\Omega d(x)(\Delta\mu)^2\unknown^\alpha\, dx.
        \end{split}
    \end{equation*}
\end{proof}
Plugging \eqref{|nablamu|^2f_t_1} and \eqref{nablamu_t1} to \eqref{d^2F/dt^2_0},  we obtain
\begin{equation}
\label{d^2F/dt^2_3}
    \begin{split}
        \frac{d^2}{dt^2}\F[\unknown](t)&=-\int_\Omega |\nabla\mu|^2\unknown_t\, dx-2\int_\Omega (\nabla\mu\cdot\nabla\mu_t)\unknown\, dx\\
        &=
        2\int_\Omega (\nabla\mu\cdot\nabla^2\phi(x)\nabla\mu)\unknown\, dx
        \\
        &\quad
        +2\alpha\int_\Omega (\nabla\mu\cdot\nabla^2(d(x)\unknown^{\alpha-1})\nabla\mu)\unknown\, dx \\
        &\quad
        +2\alpha(\alpha-1)\int_\Omega d(x)(\nabla \unknown\cdot\nabla\mu)^2\unknown^{\alpha-2}\, dx
        \\
        &\quad
        +4\alpha(\alpha-1)\int_\Omega d(x)(\nabla \unknown\cdot \nabla\mu)\Delta\mu \unknown^{\alpha-1}\, dx\\
        &\quad+2\alpha(\alpha-1)\int_\Omega d(x) (\Delta\mu)^2\unknown^\alpha\, dx.
    \end{split}
\end{equation}

We prepare the following lemma to estimate the $\nabla^2 (d(x)\unknown^{\alpha-1})$ term of \eqref{d^2F/dt^2_3}
\begin{lem}
    \label{nabla^2(d(x)f^(alpha-1))}
    Let $\unknown$ be a classical solution of \eqref{Nonlinear-Fokker-Planck} on $\overline{\Omega}\times [0, \infty)$. Then, 
    \begin{equation}
    \label{nabla^2(d(x)f^(alpha-1))1}
        \begin{split}
            \int_\Omega (\nabla\mu\cdot\nabla^2(d(x)\unknown^{\alpha-1})\nabla\mu)\unknown\, dx
            &=
            -(\alpha-1)\int_\Omega d(x)(\nabla \unknown\cdot\nabla^2\mu\nabla\mu)\unknown^{\alpha-1}\, dx
            \\
            &\quad-(\alpha-1)\int_\Omega d(x)(\nabla \unknown\cdot\nabla\mu)\Delta\mu \unknown^{\alpha-1}\, dx
            \\
            &\quad
            -(\alpha-1)\int_\Omega d(x)(\nabla \unknown\cdot\nabla\mu)^2\unknown^{\alpha-2}\, dx
            \\
            &\quad
            -\int_\Omega (\nabla d(x)\cdot\nabla^2\mu\nabla\mu)\unknown^\alpha\, dx 
            \\
            &\quad
            -\int_\Omega (\nabla d(x)\cdot\nabla\mu)\Delta\mu \unknown^\alpha\, dx
            \\
            &\quad-\int_\Omega (\nabla d(x)\cdot\nabla\mu)(\nabla \unknown\cdot\nabla\mu)\unknown^{\alpha-1}\, dx.
            \end{split}
    \end{equation}
\end{lem}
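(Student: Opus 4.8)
The plan is to derive the identity \eqref{nabla^2(d(x)f^(alpha-1))1} by a single integration by parts that removes one derivative from the Hessian $\nabla^2(d(x)\unknown^{\alpha-1})$, after which the product rule and the chain rule produce the six listed terms. There is no analytic estimate involved here, only an exact computation; the lemma is a bookkeeping identity that later feeds into the estimate of \eqref{d^2F/dt^2_3}.

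Write $g:=d(x)\unknown^{\alpha-1}$, so that the integrand on the left is, in coordinates,
\[
 (\nabla\mu\cdot\nabla^2 g\,\nabla\mu)\unknown=\sum_{i,j=1}^n \mu_{x_i}\,\partial_{x_i}(g_{x_j})\,\mu_{x_j}\,\unknown .
\]
First I would integrate by parts in the variable $x_i$, moving $\partial_{x_i}$ off $g_{x_j}$ and onto the factor $\mu_{x_i}\mu_{x_j}\unknown$. The resulting boundary contribution is $\int_{\partial\Omega}(\nabla\mu\cdot\nu)(\nabla g\cdot\nabla\mu)\,\unknown\,d\sigma$, and it vanishes: the flux boundary condition in \eqref{Nonlinear-Fokker-Planck} together with the positivity of the classical solution $\unknown$ forces $\nabla\mu\cdot\nu=0$ on $\partial\Omega$. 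It is important to integrate by parts \emph{before} expanding the Hessian, so that the normal trace of $\nabla\mu$ is never differentiated.

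Next, apply the product rule to $\partial_{x_i}(\mu_{x_i}\mu_{x_j}\unknown)$ and sum over $i$. Three groups appear: when the derivative lands on $\mu_{x_i}$ one gets $\sum_i\mu_{x_ix_i}=\Delta\mu$, hence the term $-\int_\Omega(\nabla g\cdot\nabla\mu)\,\Delta\mu\,\unknown\,dx$; when it lands on $\mu_{x_j}$ one gets $\sum_i\mu_{x_jx_i}\mu_{x_i}=(\nabla^2\mu\,\nabla\mu)_j$, hence $-\int_\Omega(\nabla g\cdot\nabla^2\mu\,\nabla\mu)\,\unknown\,dx$; and when it lands on $\unknown$ one gets $\sum_i\mu_{x_i}\unknown_{x_i}=\nabla\mu\cdot\nabla\unknown$, hence $-\int_\Omega(\nabla g\cdot\nabla\mu)(\nabla\mu\cdot\nabla\unknown)\,dx$. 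Finally I would substitute $\nabla g=\unknown^{\alpha-1}\nabla d(x)+(\alpha-1)d(x)\unknown^{\alpha-2}\nabla\unknown$ into each of these three integrals: each one splits into two, producing exactly the six terms on the right-hand side of \eqref{nabla^2(d(x)f^(alpha-1))1}, the three carrying the factor $(\alpha-1)d(x)$ and the three carrying $\nabla d(x)$, with the powers of $\unknown$ as stated.

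The only point requiring care is matching signs and the powers $\unknown^{\alpha-2},\unknown^{\alpha-1},\unknown^{\alpha}$ in this last step, but since each preliminary integral is manifestly one of the pairs appearing after inserting $\nabla g$, the verification is routine. No Sobolev, Poincar\'e, or pointwise lower bound on $\unknown$ is needed for this lemma; those inputs enter only afterwards, when the right-hand side of \eqref{d^2F/dt^2_3} is estimated.
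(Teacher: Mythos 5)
Your proof is correct and is essentially the paper's own argument: write the integrand as a divergence plus a remainder (equivalently, integrate by parts once off the Hessian of $d(x)\unknown^{\alpha-1}$), kill the boundary term via $\unknown\nabla\mu\cdot\nu=0$, and then expand $\nabla(d(x)\unknown^{\alpha-1})$ by the product rule to split each of the three resulting integrals into its $(\alpha-1)d(x)$ and $\nabla d(x)$ parts. The bookkeeping of signs and powers of $\unknown$ in your sketch matches \eqref{nabla^2(d(x)f^(alpha-1))1} exactly.
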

\begin{proof}
    We compute $\nabla^2(d(x)\unknown^{\alpha-1})$. We denote 
    \begin{equation}
        \nabla^2 (d(x)\unknown^{\alpha-1})=((d(x)\unknown^{\alpha-1})_{x_ix_j})_{i, j}. 
    \end{equation}
    Then,  by direct calculations,  we obtain
    \begin{equation*}
        \begin{split}(\nabla\mu\cdot\nabla^2(d(x)\unknown^{\alpha-1})\nabla\mu)\unknown&=\sum_{i=1}^n\left(\mu_{x_i}\left(\sum_{j=1}^n(d(x)\unknown^{\alpha-1})_{x_ix_j}\mu_{x_j}\right)\right)\unknown\\
            &=\sum_{i, j=1}^n\left(\mu_{x_i}(d(x)\unknown^{\alpha-1})_{x_i}\mu_{x_j}\unknown\right)_{x_j}
            \\
            &\quad
            -\sum_{i, j=1}^n\left((d(x)\unknown^{\alpha-1})_{x_i}(\mu_{x_i}\mu_{x_j}\unknown)_{x_j}\right).
        \end{split}
    \end{equation*}
    The first term of the right-hand side turns into
    \begin{equation*}
        \sum_{i, j=1}^n\left(\mu_{x_i}(d(x)\unknown^{\alpha-1})_{x_i}\mu_{x_j}\unknown\right)_{x_j}=\Div((\nabla\mu\cdot\nabla(d(x)\unknown^{\alpha-1})\unknown\nabla\mu).
    \end{equation*}
    By calculating the second term of the right-hand side,  we obtain
    \begin{equation*}
        \begin{split}
            &\qquad
            -\sum_{i, j=1}^n\left((d(x)\unknown^{\alpha-1})_{x_i}(\mu_{x_i}\mu_{x_j}\unknown)_{x_j}\right)
            \\
            &=
            -\sum_{i, j=1}^n\left((d(x)\unknown^{\alpha-1})_{x_i}
            \left(
            (
            \mu_{x_i x_j}\mu_{x_j})\unknown
            +
            \mu_{x_i}\mu_{x_jx_j}\unknown
            +
            \mu_{x_i}\mu_{x_j}\unknown_{x_j}
            \right)
            \right)\\
            &=
            -(\nabla(d(x)\unknown^{\alpha-1})\cdot\nabla^2\mu\nabla\mu)\unknown
            -(\nabla(d(x)\unknown^{\alpha-1})\cdot\nabla\mu)\Delta\mu \unknown
            \\
            &\quad
            -(\nabla(d(x)\unknown^{\alpha-1})\cdot\nabla\mu)(\nabla\mu\cdot\nabla \unknown)
            .
        \end{split}
    \end{equation*}
    Thus,  we obtain
    \begin{equation}
    \label{eq:2.Hesse-Diffusion}
        \begin{split}
        &\quad
        (\nabla\mu\cdot\nabla^2(d(x)\unknown^{\alpha-1})\nabla\mu)\unknown
        \\
        &=
        \Div(\nabla\mu\cdot\nabla(d(x)\unknown^{\alpha-1})\unknown\nabla\mu)
        \\
        &\quad
        -\nabla(d(x)\unknown^{\alpha-1})\cdot
        \left(
        \unknown\nabla^2\mu\nabla\mu
        +
        \Delta\mu \unknown\nabla\mu
        +
        (\nabla\mu\cdot\nabla \unknown)\nabla\mu
        \right).
        \end{split}
    \end{equation}
    Therefore,  integrating on $\Omega$ of both sides of \eqref{eq:2.Hesse-Diffusion},  we have
    \begin{equation}
        \begin{split}
        &\quad
         \int_{\Omega}(\nabla\mu\cdot\nabla^2(d(x)\unknown^{\alpha-1})\nabla\mu)\unknown\, dx
        \\
        &=   
        -
        \int_\Omega
        \nabla(d(x)\unknown^{\alpha-1})\cdot
        \left(
        \unknown\nabla^2\mu\nabla\mu
        +
        \Delta\mu \unknown\nabla\mu
        +
        (\nabla\mu\cdot\nabla \unknown)\nabla\mu
        \right)
        \, dx, 
        \end{split}
    \end{equation}    
    since the integral of the first term in the right-hand side of \eqref{eq:2.Hesse-Diffusion} vanishes by using the boundary condition of \eqref{Nonlinear-Fokker-Planck} with the divergence theorem. By direct computation of $\nabla(d(x)\unk^{\alpha-1})$,  we obtain \eqref{nabla^2(d(x)f^(alpha-1))1}.
\end{proof}

Plugging the above computation into \eqref{d^2F/dt^2_3},  the second time derivative of $\F[f](t)$  can be expressed as follows.
    \begin{equation}
    \label{d^2F/dt^2_4}
        \begin{split}
            \frac{d^2}{dt^2}\F[\unknown](t)&=2\int_\Omega (\nabla\mu\cdot\nabla^2\phi(x)\nabla\mu)\unknown\, dx
            \\
            &\quad
            -2\alpha(\alpha-1)\int_\Omega d(x)(\nabla \unknown\cdot\nabla^2\mu\nabla\mu)\unknown^{\alpha-1}\, dx\\
            &\quad+2\alpha(\alpha-1)\int_\Omega d(x)(\nabla \unknown\cdot\nabla\mu)\Delta\mu \unknown^{\alpha-1}\, dx
            \\
            &\quad
            +2\alpha(\alpha-1)\int_\Omega d(x)(\Delta\mu)^2\unknown^\alpha\, dx\\
            &\quad-2\alpha\int_\Omega (\nabla d(x)\cdot\nabla^2\mu\nabla\mu)\unknown^\alpha\, dx
            \\
            &\quad
            -2\alpha\int_\Omega  (\nabla d(x)\cdot\nabla\mu)\Delta\mu \unknown^\alpha\, dx            
            \\
            &\quad
            -2\alpha\int_\Omega (\nabla d(x)\cdot\nabla\mu)(\nabla \unknown\cdot\nabla\mu)\unknown^{\alpha-1}\, dx.
        \end{split}
    \end{equation}

If $d$ is a constant,  \eqref{d^2F/dt^2_4} coincides with the previous
result about the entropy dissipation methods by
\cites{MR1853037, MR3497125},  that is, the last three terms of the
right-hand side in \eqref{d^2F/dt^2_4} appear in the effect of
inhomogeneity of the diffusion.

We proceed with the computation according to the entropy dissipation methods.
We consider the third term in the right-hand side of
\eqref{d^2F/dt^2_4}.
 
\begin{lem}
    \label{nablafnablamuDeltamu}
    Let $\unknown$ be a classical solution of \eqref{Nonlinear-Fokker-Planck} on $\overline{\Omega}\times [0, \infty)$. Then
    \begin{equation}
    \label{D(nabla f nablamu)Deltamu f^alpha-1}
        \begin{split}
            &\quad
            2\alpha\int_\Omega d(x)(\nabla \unknown\cdot\nabla\mu)\Delta\mu \unknown^{\alpha-1}\, dx
            \\
            &=
            -
            2\int_\Omega d(x)(\Delta\mu)^2\unknown^\alpha\, dx
            -
            \int_\Omega d(x)\Delta |\nabla\mu|^2\unknown^\alpha\, dx
            \\
            &\quad
            +
            2\int_\Omega d(x) |\nabla^2\mu|^2\unknown^\alpha\, dx
            -
            2\int_\Omega (\nabla d(x)\cdot \nabla\mu)\Delta\mu \unknown^\alpha\, dx.
        \end{split}
    \end{equation}
\end{lem}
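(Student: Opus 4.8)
The plan is to recast the left-hand side of \eqref{D(nabla f nablamu)Deltamu f^alpha-1} so that a single integration by parts, followed by Bochner's formula, produces exactly the claimed terms.

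First I would use the elementary identity $\alpha\unknown^{\alpha-1}\nabla\unknown=\nabla(\unknown^\alpha)$ to write
\begin{equation*}
 2\alpha\int_\Omega d(x)(\nabla\unknown\cdot\nabla\mu)\Delta\mu\,\unknown^{\alpha-1}\,dx
 =
 2\int_\Omega d(x)\,\Delta\mu\,(\nabla(\unknown^\alpha)\cdot\nabla\mu)\,dx .
\end{equation*}
Next I would integrate by parts, viewing $d(x)\Delta\mu\,\nabla\mu$ as a vector field and $\unknown^\alpha$ as the scalar factor. The resulting boundary term is $\int_{\partial\Omega}\unknown^{\alpha-1}d(x)\Delta\mu\,(\unknown\nabla\mu\cdot\nu)\,d\sigma$, which vanishes by the boundary condition of \eqref{Nonlinear-Fokker-Planck}. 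Expanding $\Div(d(x)\Delta\mu\,\nabla\mu)=\Delta\mu\,(\nabla d(x)\cdot\nabla\mu)+d(x)(\nabla(\Delta\mu)\cdot\nabla\mu)+d(x)(\Delta\mu)^2$, this yields
\begin{equation*}
 2\int_\Omega d(x)\,\Delta\mu\,(\nabla(\unknown^\alpha)\cdot\nabla\mu)\,dx
 =
 -2\int_\Omega (\nabla d(x)\cdot\nabla\mu)\Delta\mu\,\unknown^\alpha\,dx
 -2\int_\Omega d(x)(\nabla(\Delta\mu)\cdot\nabla\mu)\,\unknown^\alpha\,dx
 -2\int_\Omega d(x)(\Delta\mu)^2\unknown^\alpha\,dx .
\end{equation*}

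Finally I would eliminate the third-order term using Bochner's formula in $\R^n$, namely $\tfrac12\Delta|\nabla\mu|^2=|\nabla^2\mu|^2+\nabla\mu\cdot\nabla(\Delta\mu)$, which follows by differentiating $|\nabla\mu|^2=\sum_{i}\mu_{x_i}^2$ twice. Substituting $\nabla\mu\cdot\nabla(\Delta\mu)=\tfrac12\Delta|\nabla\mu|^2-|\nabla^2\mu|^2$ into the middle integral above gives precisely \eqref{D(nabla f nablamu)Deltamu f^alpha-1}.

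I expect no real obstacle: the computation is essentially bookkeeping, the one non-routine ingredient being Bochner's formula, and the only point requiring care is the vanishing of the boundary term, for which one writes $\unknown^\alpha\Delta\mu(\nabla\mu\cdot\nu)=\unknown^{\alpha-1}\Delta\mu(\unknown\nabla\mu\cdot\nu)$ in order to invoke the boundary condition of \eqref{Nonlinear-Fokker-Planck}. As with the other lemmas of this section, the appearance of $\Delta|\nabla\mu|^2$ tacitly uses $\mu\in C^3$.
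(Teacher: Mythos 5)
Your proof is correct and follows essentially the same route as the paper: a single integration by parts (with the boundary term killed by the no-flux condition) combined with the pointwise identity $\tfrac12\Delta|\nabla\mu|^2=|\nabla^2\mu|^2+\nabla\mu\cdot\nabla(\Delta\mu)$, which the paper derives in index notation via $(\mu_{x_i}\mu_{x_jx_j})_{x_i}=\mu_{x_ix_i}\mu_{x_jx_j}+(\mu_{x_i}\mu_{x_ix_j})_{x_j}-\mu_{x_ix_j}\mu_{x_ix_j}$ rather than citing it as Bochner's formula. The only cosmetic difference is that the paper first groups the diffusion coefficient with the density through $\alpha d(x)\rho^{\alpha-1}\nabla\rho=\nabla(d(x)\rho^\alpha)-\rho^\alpha\nabla d(x)$ and then writes the integrand as a divergence plus correction terms, whereas you keep $d(x)$ attached to $\Delta\mu\,\nabla\mu$ and move the derivative off $\rho^\alpha$; both yield the $\nabla d$ term and the remaining integrals identically.
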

\begin{proof}
    First,  note that
    \begin{equation}
        \label{eq:2.Pohozaev-method-3}
        \alpha(d(x)\nabla\unk)\unk^{\alpha-1}
        =
        \nabla (d(x)\unknown^\alpha)
        -
        \unk^{\alpha}\nabla d(x).
    \end{equation}
    Next,  we compute $(\nabla(d(x)\unknown^\alpha)\cdot\nabla\mu)\Delta\mu$. 
    Writing a vector in component form,  we obtain
    \begin{equation}
        \label{eq:2.Pohozaev-method-1}
        (\nabla(d(x)\unknown^\alpha)\cdot\nabla\mu)\Delta\mu
        =\sum_{i, j=1}^n(d(x)\unknown^\alpha)_{x_i}\mu_{x_i}\mu_{x_jx_j}.
    \end{equation}
    Making a divergence form in the right-hand side of \eqref{eq:2.Pohozaev-method-1} as follows:
    \begin{equation}
        \label{eq:2.Pohozaev-method-2}
        (d(x)\unknown^\alpha)_{x_i}\mu_{x_i}\mu_{x_jx_j}
        =
        (d(x)\unknown^\alpha\mu_{x_i}\mu_{x_jx_j})_{x_i}
        -
        d(x)(\mu_{x_i}\mu_{x_jx_j})_{x_i}\unknown^\alpha.
    \end{equation}
    Compute the second term of the right-hand side of \eqref{eq:2.Pohozaev-method-2}
    as
    \begin{equation}
        (\mu_{x_i}\mu_{x_jx_j})_{x_i}
        =
        \mu_{x_ix_i}\mu_{x_jx_j}
        +
        \mu_{x_i}\mu_{x_jx_jx_i}
        =
        \mu_{x_ix_i}\mu_{x_jx_j}
        +
        (\mu_{x_i}\mu_{x_ix_j})_{x_j}
        -
        \mu_{x_ix_j}\mu_{x_ix_j}.
    \end{equation}
    Note that $\mu_{x_i}\mu_{x_ix_j}=\frac{1}{2}(\mu_{x_i}^2)_{x_j}$. Thus,  we arrive at
    \begin{equation}
    \label{eq:nabla(d unknown^alpha)}
        \begin{split}
        &\quad 
        (\nabla(d(x)\unknown^\alpha)\cdot\nabla\mu)\Delta\mu
        \\
        &=
        \sum_{i, j=1}^n
        \biggl(
         (d(x)\unknown^\alpha\mu_{x_i}\mu_{x_jx_j})_{x_i}
         -d(x)\mu_{x_ix_i}\mu_{x_jx_j}\unknown^\alpha
         \\
         &\qquad
         -\frac{d(x)}{2}(\mu_{x_i}^2)_{x_jx_j}\unknown^\alpha
         +d(x)\mu_{x_ix_j}\mu_{x_ix_j}\unknown^\alpha
          \biggr)
          \\
        &=
        \Div(d(x)\unknown^\alpha \nabla\mu\Delta\mu)
        -d(x)(\Delta\mu)^2\unknown^\alpha
        \\
        &\quad
        -\frac{1}{2}d(x)\Delta(|\nabla\mu|^2)\unknown^\alpha
        +d(x)|\nabla^2\mu|^2\unknown^\alpha. 
        \end{split}
    \end{equation}
    Therefore,  integrating on $\Omega$ of both side of \eqref{eq:nabla(d unknown^alpha)},  we have, 
    \begin{equation*}
        \begin{split}
            2\int_\Omega(\nabla (d(x)\unknown^\alpha)\cdot\nabla\mu)\Delta\mu\, dx
            &=-2\int_\Omega d(x)(\Delta\mu)^2\unknown^\alpha\, dx\\
            &\quad-\int_\Omega d(x)\Div(\nabla(|\nabla\mu|^2))\unknown^\alpha\, dx\\
            &\quad+2\int_\Omega d(x)|\nabla^2\mu|^2\unknown^\alpha\, dx, 
        \end{split}
    \end{equation*}
    since the integral of the first term in the right-hand side of \eqref{eq:nabla(d unknown^alpha)} vanishes by using the boundary condition of \eqref{Nonlinear-Fokker-Planck} with the divergence theorem. Using \eqref{eq:2.Pohozaev-method-3}, we obtain \eqref{D(nabla f nablamu)Deltamu f^alpha-1}.
\end{proof}

We next calculate the second term on the right-hand side of \eqref{d^2F/dt^2_4}.

\begin{lem}
    \label{nabla^2mu's-term}
     Let $\unknown$ be a classical solution of \eqref{Nonlinear-Fokker-Planck} on $\overline{\Omega}\times [0, \infty)$. Then
    \begin{equation}
    \label{nabla^2mu's-term_1}
        \begin{split}
            -2\alpha\int_\Omega d(x)(\nabla \unknown\cdot \nabla^2\mu\nabla\mu)\unknown^{\alpha-1}\, dx
            &
            =2\int_\Omega (\nabla d(x)\cdot\nabla^2\mu\nabla\mu)\unknown^\alpha\, dx
            \\
            &\quad
            -\int_{\partial\Omega} d(x)\unknown^\alpha\nabla (|\nabla\mu|^2)\cdot \nu\, d\sigma
            \\
            &\quad
            +\int_\Omega d(x) \Delta( |\nabla\mu|^2)\unknown^\alpha\, dx.
        \end{split}
    \end{equation}
    
\end{lem}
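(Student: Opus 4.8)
\textbf{Proof plan for Lemma \ref{nabla^2mu's-term}.} The plan is to convert the weight $\alpha\unknown^{\alpha-1}\nabla\unknown$ into a full gradient, split off the $\nabla d$ contribution, and then integrate by parts once. Concretely, I would start from the chain rule
\begin{equation*}
    \alpha\, d(x)\,\unknown^{\alpha-1}\nabla\unknown
    =
    d(x)\nabla(\unknown^\alpha)
    =
    \nabla(d(x)\unknown^\alpha)
    -
    \unknown^\alpha\nabla d(x),
\end{equation*}
which is the same algebraic identity already used in \eqref{eq:2.Pohozaev-method-3}. Dotting both sides with $\nabla^2\mu\,\nabla\mu$ and integrating over $\Omega$ gives
\begin{equation*}
    -2\alpha\int_\Omega d(x)(\nabla\unknown\cdot\nabla^2\mu\,\nabla\mu)\unknown^{\alpha-1}\, dx
    =
    -2\int_\Omega (\nabla(d(x)\unknown^\alpha)\cdot\nabla^2\mu\,\nabla\mu)\, dx
    +2\int_\Omega (\nabla d(x)\cdot\nabla^2\mu\,\nabla\mu)\unknown^\alpha\, dx,
\end{equation*}
and the last integral is already the first term on the right-hand side of \eqref{nabla^2mu's-term_1}.

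For the remaining integral I would use $\nabla^2\mu\,\nabla\mu=\tfrac12\nabla(|\nabla\mu|^2)$ (the same computation as in the proof of Lemma \ref{|nablamu|^2f_t}), so that
\begin{equation*}
    -2\int_\Omega (\nabla(d(x)\unknown^\alpha)\cdot\nabla^2\mu\,\nabla\mu)\, dx
    =
    -\int_\Omega \nabla(d(x)\unknown^\alpha)\cdot\nabla(|\nabla\mu|^2)\, dx.
\end{equation*}
Then a single integration by parts in the form $\int_\Omega \nabla f\cdot\nabla g\, dx=\int_{\partial\Omega} f\,(\nabla g\cdot\nu)\, d\sigma-\int_\Omega f\,\Delta g\, dx$ with $f=d(x)\unknown^\alpha$ and $g=|\nabla\mu|^2$ produces
\begin{equation*}
    -\int_\Omega \nabla(d(x)\unknown^\alpha)\cdot\nabla(|\nabla\mu|^2)\, dx
    =
    -\int_{\partial\Omega} d(x)\unknown^\alpha\,\nabla(|\nabla\mu|^2)\cdot\nu\, d\sigma
    +\int_\Omega d(x)\,\Delta(|\nabla\mu|^2)\,\unknown^\alpha\, dx,
\end{equation*}
and adding back the $\nabla d$ term gives exactly \eqref{nabla^2mu's-term_1}.

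There is essentially no obstacle here: the argument is purely an algebraic rearrangement followed by one application of the divergence theorem. The only point that deserves care is that, unlike in the earlier lemmas, the boundary term is \emph{not} discarded — the natural boundary condition of \eqref{Nonlinear-Fokker-Planck} controls $\unknown\nabla\mu\cdot\nu$, not $\nabla(|\nabla\mu|^2)\cdot\nu$, so the surface integral $-\int_{\partial\Omega} d(x)\unknown^\alpha\,\nabla(|\nabla\mu|^2)\cdot\nu\, d\sigma$ must be retained and dealt with later (this is where the convexity of $\Omega$ will eventually be used, via the standard trick of writing $\nabla(|\nabla\mu|^2)\cdot\nu$ in terms of the second fundamental form and the Neumann condition).
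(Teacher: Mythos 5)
Your proposal is correct and follows essentially the same route as the paper: the identity $\alpha d(x)\unknown^{\alpha-1}\nabla\unknown=\nabla(d(x)\unknown^\alpha)-\unknown^\alpha\nabla d(x)$ from \eqref{eq:2.Pohozaev-method-3}, the observation $\nabla^2\mu\,\nabla\mu=\tfrac12\nabla(|\nabla\mu|^2)$, and one integration by parts retaining the boundary term. The paper merely writes the same computation out in coordinates; your remark about keeping the surface integral for later use with the second fundamental form matches the paper's subsequent treatment of $I_4$.
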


\begin{proof}
    Taking the inner product of $\nabla^2\mu\nabla\mu$ both side of \eqref{eq:2.Pohozaev-method-3},  we have
    \begin{equation}
    \label{eq:3.pohozaev-method_2.0}
        \alpha d(x)(\nabla \unk \cdot \nabla^2\mu\nabla\mu)\unknown^{\alpha-1}=(\nabla(d(x)\unknown^\alpha)\cdot\nabla^2\mu\nabla\mu)-(\nabla d(x)\cdot \nabla^2\mu\nabla\mu)\unk^{\alpha}.
    \end{equation}
    Next,  we compute $(\nabla(d(x)\unk^\alpha)\cdot\nabla^2\mu\nabla\mu)$. Writing a vector in component form,  we obtain
    \begin{equation}
    \label{eq:3.Pohozaev-method_2.1}
    (\nabla(d(x)\unk^\alpha)\cdot\nabla^2\mu\nabla\mu)
    =\sum_{i, j}^n(d(x)\unknown^\alpha)_{x_i}\mu_{x_ix_j}\mu_{x_j}.
    \end{equation}
    Making a divergence form in the right-hand side of \eqref{eq:3.Pohozaev-method_2.1} as follows: 
    \begin{equation}
    \label{eq:3.pohozaev-method_2.2}
        (d(x)\unknown^\alpha)_{x_i}\mu_{x_ix_j}\mu_{x_j}=(d(x)\unk^\alpha\mu_{x_ix_j}\mu_{x_j})_{x_i}-d(x)\unk^\alpha(\mu_{x_ix_j}\mu_{x_j})_{x_i}.
    \end{equation}
   Note that $\mu_{x_ix_j}\mu_{x_j}=\frac{1}{2}((\mu_{x_j})^2)_{x_i}$. Thus,  we arrive at 
   \begin{equation}
   \label{eq:3.pohozaev-method_2.3}
    \begin{split}
        (\nabla(d(x)\unk^\alpha)\cdot\nabla^2\mu\nabla\mu)
        &
        =\sum_{i, j=1}^n\left(\frac{1}{2}(d(x)\unk^\alpha(\mu_{x_j})^2_{x_i})_{x_i}-\frac{d(x)}{2}(\mu_{x_j})^2_{x_ix_i}\unknown^\alpha\right)
        \\
        &
        =\frac{1}{2}\Div(d(x)\unknown^\alpha\nabla|\nabla\mu|^2)-\frac{d(x)}{2}\Delta(|\nabla\mu|^2)\unknown^\alpha
    \end{split}
   \end{equation}
   Therefore,  integrating on $\Omega$ of both side of \eqref{eq:3.pohozaev-method_2.3},  we have, 
   \begin{equation}
        \begin{split}
            2\int_\Omega (\nabla(d(x)\unknown^\alpha)\cdot \nabla^2\mu\nabla\mu)\, dx
            &=
            \int_\Omega \Div(d(x)\unknown^\alpha\nabla|\nabla\mu|^2)\, dx
            \\
            &\quad
            -\int_\Omega d(x) \Delta(|\nabla\mu|^2)\unk^\alpha\, dx.
        \end{split}
   \end{equation}
   
    Using \eqref{eq:3.pohozaev-method_2.0} together with the divergence theorem,  we obtain \eqref{nabla^2mu's-term_1}. 
\end{proof}

Plugging \eqref{D(nabla f nablamu)Deltamu f^alpha-1} and \eqref{nabla^2mu's-term_1} into \eqref{d^2F/dt^2_4},  we obtain 
\begin{equation}
    \begin{split}
        &\quad
        \frac{d^2}{dt^2}\F[\unknown](t)
        \\
        &
        =2\int_\Omega (\nabla\mu\cdot\nabla^2\phi(x)\nabla\mu)\unknown\, dx
        +2(\alpha-1)\int_\Omega d(x) |\nabla^2\mu|^2\unknown^\alpha\, dx
        \\
        &\quad
        +2(\alpha-1)^2\int_\Omega d(x)(\Delta\mu)^2\unknown^\alpha\, dx-(\alpha-1)\int_{\partial\Omega} d(x)\unknown^\alpha\nabla (|\nabla\mu|^2)\cdot\nu\, d\sigma 
        \\
        &\quad
        -2\int_\Omega (\nabla d(x)\cdot\nabla^2\mu\nabla\mu)\unknown^\alpha\, dx-2(2\alpha-1)\int_\Omega  (\nabla d(x)\cdot\nabla\mu)\Delta\mu \unknown^\alpha\, dx 
        \\
        &\quad
        -2\alpha\int_\Omega (\nabla d(x)\cdot\nabla\mu)(\nabla \unknown\cdot\nabla\mu)\unknown^{\alpha-1}\, dx
        \\
        &=:
        2I_1+2(\alpha-1)I_2+2(\alpha-1)^2 I_3-(\alpha-1)I_4
        \\
        &\quad
        -2I_5-2(2\alpha-1)I_6-2\alpha I_7.
    \end{split}
\end{equation}

Since $\unk$ is positive,  we have $\nabla\mu\cdot\nu=0$ on $\partial\Omega$. Then,  it is well-known that the outer normal derivative of $|\nabla\mu|^2$ can be written as
\begin{equation}
    \nabla|\nabla\mu|^2\cdot\nu
    =
    2B_x(\nabla\mu, \nabla\mu), 
\end{equation}
at $x\in\partial\Omega$,  where $B_x$ is the second fundamental form at $x\in \partial \Omega$ (cf. \cite{MR555661}*{Lemma 5.3}, \cite{MR3348119}*{Lemma 4.2}). From the convexity assumption of $\Omega$,  the principal curvature of $\partial\Omega$ is non-positive thus we have
$I_4\leq0$. Therefore,  we obtain
\begin{equation}
    \label{second-derivative-F}
    \frac{d^2}{dt^2}\F[\unknown](t)\ge 2I_1+2(\alpha-1)I_2+2(\alpha-1)^2I_3-2I_5-2(2\alpha-1)I_6-2\alpha I_7.
\end{equation}

\begin{remark}
    If $d=1$,  then $\nabla d=0$ so \eqref{second-derivative-F} can be written as
    \begin{equation}
        \frac{d^2}{dt^2}\F[\unknown](t)\ge 2I_1+2(\alpha-1)I_2+2(\alpha-1)^2I_3, 
    \end{equation}
    which was deduced by \cite{MR1853037}. The above computation is based on     \cite{MR3497125}*{\S 2.5}. Inequality \eqref{second-derivative-F} is an extension of the previous result for the case where $d$ is not constant.
\end{remark}

To handle terms $I_5$,  $I_6$,  and $I_7$,  we prepare the following lemma. First,  we provide an estimate for $I_5$.

\begin{lem}
\label{lem:I5}
    Let $\unknown$ be a bounded, positive classical solution of \eqref{Nonlinear-Fokker-Planck} on $\overline{\Omega}\times [0, \infty)$. Then, 
    \begin{equation}
    \label{eq:2.Est_Grad_Diffusion_1}
        \begin{split}
            &\quad
            \left|\int_\Omega (\nabla d(x)\cdot \nabla^2\mu\nabla\mu)\unknown^\alpha\, dx\right|
            \\
            &\le \frac{\|\nabla d\|_\infty^2\|\unknown^{\alpha-1}\|_\infty}{2(\alpha-1)\min_{x \in \Omega } d(x)}\int_\Omega |\nabla\mu|^2\unknown\, dx
            +
            \frac{\alpha-1}{2}\int_{\Omega} d(x)|\nabla^2\mu|^2\unknown^\alpha\, dx.
        \end{split}
    \end{equation}
\end{lem}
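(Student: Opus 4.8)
The plan is to reduce the statement to a pointwise Cauchy--Schwarz bound followed by Young's inequality with a carefully chosen weight. First I would record, for the integrand, the pointwise estimate
\[
 |\nabla d(x)\cdot\nabla^2\mu\nabla\mu|
 \le |\nabla d(x)|\,|\nabla^2\mu\nabla\mu|
 \le |\nabla d(x)|\,|\nabla^2\mu|\,|\nabla\mu|,
\]
where $|\nabla^2\mu|^2=\sum_{i,j=1}^n\mu_{x_ix_j}^2$ denotes the Frobenius norm, so that the operator-norm bound $|\nabla^2\mu\nabla\mu|\le|\nabla^2\mu|\,|\nabla\mu|$ applies.

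Next I would split the product so that the ``good'' factor $\sqrt{d(x)}\,|\nabla^2\mu|\,\unknown^{\alpha/2}$ is isolated, writing
\[
 |\nabla d(x)|\,|\nabla^2\mu|\,|\nabla\mu|\,\unknown^\alpha
 =\Bigl(\sqrt{d(x)}\,|\nabla^2\mu|\,\unknown^{\alpha/2}\Bigr)
 \Bigl(\tfrac{|\nabla d(x)|\,|\nabla\mu|\,\unknown^{\alpha/2}}{\sqrt{d(x)}}\Bigr),
\]
and applying Young's inequality $ab\le\tfrac{\varepsilon}{2}a^2+\tfrac{1}{2\varepsilon}b^2$ with $\varepsilon=\alpha-1>0$. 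This choice of parameter is exactly the one that turns the first resulting term into $\tfrac{\alpha-1}{2}\,d(x)|\nabla^2\mu|^2\unknown^\alpha$, matching the second term on the right-hand side of \eqref{eq:2.Est_Grad_Diffusion_1}.

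Finally, for the leftover term $\tfrac{1}{2(\alpha-1)}\tfrac{|\nabla d(x)|^2}{d(x)}|\nabla\mu|^2\unknown^\alpha$ I would use $|\nabla d(x)|^2/d(x)\le\|\nabla d\|_\infty^2/\min_{x\in\Omega}d(x)$ — here the positive lower bound on $d$ and the boundedness of $\nabla d$ enter — together with $\unknown^\alpha=\unknown^{\alpha-1}\unknown\le\|\unknown^{\alpha-1}\|_\infty\,\unknown$, which uses the boundedness of $\unknown$. Integrating the resulting pointwise inequality over $\Omega$ gives \eqref{eq:2.Est_Grad_Diffusion_1}. There is no genuine obstacle; the only point deserving care is the choice of the Young parameter so that precisely the coefficient $\tfrac{\alpha-1}{2}$ appears, since this term must later be absorbed into the positive contribution $2(\alpha-1)\int_\Omega d(x)|\nabla^2\mu|^2\unknown^\alpha\,dx$ of \eqref{second-derivative-F} once this lemma is combined with the analogous estimates for $I_6$ and $I_7$.
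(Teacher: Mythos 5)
Your argument is correct and is essentially the paper's proof: the same pointwise bound $|\nabla d\cdot\nabla^2\mu\nabla\mu|\le|\nabla d|\,|\nabla^2\mu|\,|\nabla\mu|$, the same $\sqrt{d}$ weighting, Young's inequality with parameter $\alpha-1$, and the same final bounds $|\nabla d|^2/d\le\|\nabla d\|_\infty^2/\min_\Omega d$ and $\unknown^\alpha\le\|\unknown^{\alpha-1}\|_\infty\unknown$. The only cosmetic difference is that the paper applies Cauchy--Schwarz at the integral level before Young's inequality, whereas you apply Young pointwise and then integrate; the resulting constants are identical.
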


\begin{proof}
    From the triangle inequality for integrals,  we have
    \begin{equation*}
            \left|\int_\Omega (\nabla d(x)\cdot\nabla^2\mu\nabla\mu)\unknown^\alpha\, dx\right|
            \le
            \int_\Omega  |\nabla d(x)||\nabla^2\mu||\nabla\mu|\unknown^\alpha\, dx.
    \end{equation*}
    Since $d(x)>0$,  it follows by H\"{o}lder's inequality and Young's inequality that
    \begin{equation*}
        \begin{split}
        &\quad
           \int_\Omega |\nabla d(x)||\nabla^2\mu||\nabla\mu|\unknown^\alpha \, dx
           \\
           &\le 
           \left(\int_\Omega \frac{1}{d(x)}|\nabla d(x)|^2|\nabla\mu|^2\unknown^\alpha\, dx\right)^{\frac{1}{2}}\left(\int_\Omega
           d(x)|\nabla^2\mu|^2\unknown^\alpha\, dx\right)^{\frac{1}{2}}
           \\
           &\le
           \frac{1}{2(\alpha-1)}\int_\Omega \frac{1}{d(x)}|\nabla d(x)|^2|\nabla\mu|^2\unknown^\alpha\, dx
            \\
            &\quad
            +\frac{(\alpha-1)}{2}\int_\Omega d(x) |\nabla^2\mu|^2\unknown^\alpha\, dx.
        \end{split}
    \end{equation*}
    Using the boundedness of $\nabla d(x)$ and $\unknown$,  we have
    \begin{equation*}
        \int_\Omega \frac{1}{d(x)}|\nabla d(x)|^2|\nabla\mu|^2\unknown^\alpha\, dx
        \le
        \frac{\|\nabla d\|_\infty^2\|\unknown^{\alpha-1}\|_\infty}{\min_{x \in \Omega} d(x)}\int_\Omega |\nabla\mu|^2\unknown\, dx.
    \end{equation*}
    Summarizing the above,  we obtain \eqref{eq:2.Est_Grad_Diffusion_1}.
\end{proof}

From \eqref{eq:2.Est_Grad_Diffusion_1},  we obtain
\begin{equation}
    \label{eq:2.Estimate_I5}
    2|I_5|
    \le 
    \frac{\|\nabla d\|_\infty^2\|\unknown^{\alpha-1}\|_\infty}{(\alpha-1)\min_{x \in \Omega } d(x)}
    \D[\unk](t)
    +
    (\alpha-1)I_2.
\end{equation}

Next, we estimate $I_6$ by $\D[\unk]$ and $I_3$.

\begin{lem}
    \label{lem:I6}
    Let $\unknown$ be a bounded, positive classical solution of \eqref{Nonlinear-Fokker-Planck} on $\overline{\Omega}\times [0, \infty)$. Then
    \begin{equation}
        \label{eq:2.Est_Grad_Diffusion_2}
        \begin{split}
        \left|\int_\Omega  (\nabla d(x)\cdot\nabla\mu)\Delta\mu \unknown^\alpha\, dx\right|
        &\le
        \frac{(2\alpha-1)\|\nabla d\|_\infty^2 \|\unknown^{\alpha-1}\|_\infty}{4(\alpha-1)^2\min_{x\in\Omega} d(x)}\int_\Omega |\nabla\mu|^2\unknown\, dx
        \\ 
        &\quad
        +\frac{(\alpha-1)^2}{2\alpha-1}\int_\Omega d(x)(\Delta\mu)^2\unknown^\alpha \, dx.
        \end{split}
    \end{equation}
\end{lem}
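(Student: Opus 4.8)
The plan is to follow the proof of Lemma~\ref{lem:I5} almost verbatim, the only subtlety being the choice of the Young parameter so that the $(\Delta\mu)^2$-term comes out with the exact coefficient $\frac{(\alpha-1)^2}{2\alpha-1}$. First I would apply the triangle inequality for integrals to get
\begin{equation*}
    \left|\int_\Omega(\nabla d(x)\cdot\nabla\mu)\Delta\mu\,\unknown^\alpha\,dx\right|
    \le
    \int_\Omega|\nabla d(x)|\,|\nabla\mu|\,|\Delta\mu|\,\unknown^\alpha\,dx.
\end{equation*}
Then, using $d(x)>0$, I would split the integrand as $\bigl(d(x)^{-1/2}|\nabla d(x)|\,|\nabla\mu|\,\unknown^{\alpha/2}\bigr)\cdot\bigl(d(x)^{1/2}|\Delta\mu|\,\unknown^{\alpha/2}\bigr)$ and apply the Cauchy--Schwarz (H\"older) inequality to obtain
\begin{equation*}
    \int_\Omega|\nabla d(x)|\,|\nabla\mu|\,|\Delta\mu|\,\unknown^\alpha\,dx
    \le
    \left(\int_\Omega\frac{1}{d(x)}|\nabla d(x)|^2|\nabla\mu|^2\unknown^\alpha\,dx\right)^{\frac12}
    \left(\int_\Omega d(x)(\Delta\mu)^2\unknown^\alpha\,dx\right)^{\frac12}.
\end{equation*}

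Next I would apply Young's inequality $ab\le\frac{1}{2\varepsilon}a^2+\frac{\varepsilon}{2}b^2$ with the specific choice $\varepsilon=\frac{2(\alpha-1)^2}{2\alpha-1}$, so that $\frac{\varepsilon}{2}=\frac{(\alpha-1)^2}{2\alpha-1}$ and $\frac{1}{2\varepsilon}=\frac{2\alpha-1}{4(\alpha-1)^2}$; this produces the second term on the right-hand side of \eqref{eq:2.Est_Grad_Diffusion_2} exactly and leaves the factor $\frac{2\alpha-1}{4(\alpha-1)^2}$ in front of $\int_\Omega d(x)^{-1}|\nabla d(x)|^2|\nabla\mu|^2\unknown^\alpha\,dx$. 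Finally, writing $\unknown^\alpha=\unknown^{\alpha-1}\unknown$ and using the boundedness of $\nabla d$ and $\unknown$ together with $d(x)\ge\min_{x\in\Omega}d(x)$, I would estimate
\begin{equation*}
    \int_\Omega\frac{1}{d(x)}|\nabla d(x)|^2|\nabla\mu|^2\unknown^\alpha\,dx
    \le
    \frac{\|\nabla d\|_\infty^2\|\unknown^{\alpha-1}\|_\infty}{\min_{x\in\Omega}d(x)}\int_\Omega|\nabla\mu|^2\unknown\,dx,
\end{equation*}
and combining the last three displays yields \eqref{eq:2.Est_Grad_Diffusion_2}.

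I do not anticipate any real obstacle here: the argument is purely a H\"older--Young estimate parallel to Lemma~\ref{lem:I5}, and the only thing that requires attention is bookkeeping the constants so that the coefficient $\frac{(\alpha-1)^2}{2\alpha-1}$ on the $\int_\Omega d(x)(\Delta\mu)^2\unknown^\alpha\,dx$ term matches the statement (this is what dictates the value of $\varepsilon$). Once the lemma is in hand, it will be used — exactly as \eqref{eq:2.Estimate_I5} was obtained from Lemma~\ref{lem:I5} — to bound $2(2\alpha-1)|I_6|$ by a multiple of $\D[\unk](t)$ plus $2(\alpha-1)^2 I_3$, absorbing the latter into the good $(\Delta\mu)^2$-term of \eqref{second-derivative-F}.
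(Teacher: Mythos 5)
Your proposal is correct and follows essentially the same route as the paper: triangle inequality, the weighted Cauchy--Schwarz split with factors $d(x)^{-1/2}$ and $d(x)^{1/2}$, Young's inequality with the parameter chosen so that the $(\Delta\mu)^2$-coefficient is exactly $\frac{(\alpha-1)^2}{2\alpha-1}$, and the final $L^\infty$ bound on $\nabla d$ and $\unknown^{\alpha-1}$. The constant bookkeeping checks out ($\frac{2\alpha-1}{4(\alpha-1)^2}\cdot\frac{(\alpha-1)^2}{2\alpha-1}=\frac{1}{4}$), so nothing further is needed.
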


\begin{proof}
    From the triangle inequality for integrals,  we have
    \begin{equation*}
        \left|\int_\Omega (\nabla d(x)\cdot\nabla\mu)\Delta\mu \unknown^\alpha\, dx\right|
        \le
        \int_\Omega |\nabla d(x)||\nabla\mu||\Delta\mu|\unknown^\alpha\, dx .
    \end{equation*}
    Similarly, in the proof of Lemma \ref{lem:I5},  it follows from H\"older's and Young's inequality that
    \begin{equation*}
        \begin{split}
            &\quad
            \int_\Omega |\nabla d(x)||\nabla\mu||\Delta\mu|\unknown^\alpha\, dx 
            \\
            &\le 
            \left(\int_\Omega \frac{|\nabla d(x)|^2}{d(x)}|\nabla\mu|^2\unknown^\alpha\, dx\right)^{\frac{1}{2}}\left(\int_\Omega d(x)(\Delta\mu)^2\unknown^\alpha \, dx\right)^{\frac{1}{2}} 
            \\
            &\le 
            \frac{2\alpha-1}{4(\alpha-1)^2}\int_\Omega \frac{|\nabla d(x)|^2}{d(x)}|\nabla\mu|^2\unknown^\alpha\, dx
            \\
            &\quad
            +\frac{(\alpha-1)^2}{2\alpha-1}\int_\Omega d(x)(\Delta\mu)^2\unknown^\alpha \, dx.
        \end{split}
    \end{equation*}
    As $\nabla d(x)$ and $\unknown$ are bounded,  we have
    \begin{equation*}
        \int_\Omega \frac{|\nabla d(x)|^2}{d(x)}|\nabla\mu|^2\unknown^\alpha\, dx
        \le\frac{\|\nabla d\|_\infty^2 \|\unknown^{\alpha-1}\|_\infty}{\min_{x\in\Omega} d(x)}\int_\Omega |\nabla\mu|^2\unknown\, dx.
    \end{equation*}
    Therefore,  \eqref{eq:2.Est_Grad_Diffusion_2} follows from summarizing the above estimates.
\end{proof}

From \eqref{eq:2.Est_Grad_Diffusion_2},  we obtain
\begin{equation}
    \label{eq:2.Estimate_I6}
    2(2\alpha-1)|I_6|
    \leq    
    \frac{(2\alpha-1)^2\|\nabla d\|_\infty^2 \|\unknown^{\alpha-1}\|_\infty}{2(\alpha-1)^2\min_{x\in\Omega} d(x)}\D[\unk](t)
    +
    2(\alpha-1)^2I_3
\end{equation}

To proceed to estimate $I_7$,  we first substitute $\nabla\unk$ by $\nabla\mu$. In the next lemma,  we use the relation $\alpha d(x)\unknown^{\alpha-1}+\phi$.

\begin{lem}
    \label{lem:I7}
    Let $\unknown$ be a classical solution of \eqref{Nonlinear-Fokker-Planck} on $\overline{\Omega}\times [0, \infty)$. Then
    \begin{equation}
    \label{eq:2.Compute_Grad_Diffusion_3}
        \begin{split}
            &\quad
            \alpha \int_\Omega (\nabla d(x)\cdot \nabla\mu)(\nabla \unknown\cdot \nabla\mu)\unknown^{\alpha-1}\, dx
            \\
            &=
            \frac{1}{\alpha-1}\int_\Omega\frac{1}{d(x)}(\nabla d(x)\cdot \nabla\mu)|\nabla\mu|^2\unknown\, dx
            \\
            &\quad
            -\frac{\alpha}{\alpha-1}\int_\Omega \frac{1}{d(x)}(\nabla d(x)\cdot \nabla\mu)^2\unknown^\alpha\, dx
            \\
            &\quad
            -\frac{1}{\alpha-1}\int_\Omega \frac{1}{d(x)}(\nabla d(x)\cdot\nabla\mu)(\nabla\phi(x)\cdot\nabla\mu)\unknown\, dx.
        \end{split}
    \end{equation}
\end{lem}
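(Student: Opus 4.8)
The plan is to eliminate $\nabla\unknown$ from the integrand by differentiating the constitutive relation $\mu = \alpha d(x)\unknown^{\alpha-1} + \phi(x)$ from \eqref{mu}. Taking the gradient gives
\begin{equation*}
\nabla\mu = \alpha\unknown^{\alpha-1}\nabla d(x) + \alpha(\alpha-1)d(x)\unknown^{\alpha-2}\nabla\unknown + \nabla\phi(x),
\end{equation*}
so that, isolating the middle term and multiplying both sides by $\unknown/((\alpha-1)d(x))$ (which is legitimate since $\unknown>0$ and $d(x)>0$),
\begin{equation*}
\alpha\unknown^{\alpha-1}\nabla\unknown = \frac{\unknown}{(\alpha-1)d(x)}\bigl(\nabla\mu - \nabla\phi(x) - \alpha\unknown^{\alpha-1}\nabla d(x)\bigr).
\end{equation*}

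Next I would take the inner product of this identity with $\nabla\mu$, obtaining
\begin{equation*}
\alpha(\nabla\unknown\cdot\nabla\mu)\unknown^{\alpha-1} = \frac{\unknown}{(\alpha-1)d(x)}\bigl(|\nabla\mu|^2 - \nabla\phi(x)\cdot\nabla\mu - \alpha\unknown^{\alpha-1}(\nabla d(x)\cdot\nabla\mu)\bigr),
\end{equation*}
then multiply through by $(\nabla d(x)\cdot\nabla\mu)$ and integrate over $\Omega$. Using $\unknown\cdot\unknown^{\alpha-1} = \unknown^\alpha$ in the last summand, the three resulting integrals are exactly the three terms on the right-hand side of \eqref{eq:2.Compute_Grad_Diffusion_3}, with the signs and the constants $\tfrac{1}{\alpha-1}$ and $\tfrac{\alpha}{\alpha-1}$ matching.

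There is essentially no obstacle here: in contrast with the preceding lemmas, this identity requires no integration by parts, no boundary terms, and no inequalities — it is a purely pointwise algebraic substitution, valid because $\unknown$ is strictly positive and $d(x)$ is bounded away from zero. The only point demanding care is the bookkeeping of the factor $\alpha(\alpha-1)$ produced by differentiating $\unknown^{\alpha-1}$, which is the source of the common denominator $\alpha-1$ on the right-hand side.
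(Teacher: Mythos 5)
Your proposal is correct and follows essentially the same route as the paper: both proofs differentiate the relation $\mu=\alpha d(x)\unknown^{\alpha-1}+\phi(x)$, solve for the term containing $\nabla\unknown$, and substitute pointwise into the integrand before integrating, with no integration by parts needed. The only cosmetic difference is that the paper first rewrites $\alpha(\alpha-1)d(x)\unknown^{\alpha-2}\nabla\unknown$ as $\alpha d(x)\nabla\unknown^{\alpha-1}$ via the identity $(\alpha-1)\unknown^{\alpha-1}\nabla\unknown=\unknown\nabla\unknown^{\alpha-1}$, whereas you carry the chain-rule factor explicitly; the resulting algebra and constants are identical.
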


\begin{proof}
    First note that $(\alpha-1)\unk^{\alpha-1}\nabla\unk = \unk\nabla\unk^{\alpha-1}$. Taking the gradient of both side of $\mu=\alpha d(x)\unknown^{\alpha-1}+\phi(x)$,  we have
    \begin{equation}
        \nabla\mu
        =
        \alpha d(x)\nabla\unknown^{\alpha-1}+\alpha \unknown^{\alpha-1}\nabla d(x)+\nabla\phi(x).
    \end{equation}
    Thus,  the integrand of $I_7$ turns into
    \begin{equation}
        \begin{split}
        &\quad
       (\nabla d(x)\cdot \nabla\mu)(\nabla \unknown\cdot \nabla\mu)\unknown^{\alpha-1}
        \\
        &=
        \frac{1}{\alpha-1}(\nabla d(x)\cdot \nabla\mu)(\nabla \unknown^{\alpha-1}\cdot \nabla\mu)\unknown
        \\
        &=
        \frac{1}{\alpha(\alpha-1) d(x)}
        \left(
        (\nabla d(x)\cdot \nabla\mu)
        \left(
        \left(
        \nabla\mu
        -
        \alpha\unk^{\alpha-1}\nabla d(x)
        -
        \nabla\phi(x)
        \right)
        \cdot \nabla\mu\right)
        \right)
        \unknown
        \end{split}
     \end{equation}
    Taking the integration on $\Omega$ on both sides, we obtain \eqref{eq:2.Compute_Grad_Diffusion_3}.
\end{proof}

Note that the second term of the right-hand side of \eqref{eq:2.Compute_Grad_Diffusion_3} is non-positive,  one have from \eqref{eq:2.Compute_Grad_Diffusion_3} that
\begin{equation}
    \label{eq:2.estimate_I7}
    \begin{split}
        -2\alpha I_7
        &\geq
        -\frac{2}{\alpha-1}\int_\Omega\frac{1}{d(x)}(\nabla d(x)\cdot \nabla\mu)|\nabla\mu|^2\unknown\, dx
        \\
        &\quad
        +\frac{2}{\alpha-1}\int_\Omega \frac{1}{d(x)}(\nabla d(x)\cdot\nabla\mu)(\nabla\phi(x)\cdot\nabla\mu)\unknown\, dx
        \\
        &\ge-\frac{2\|\nabla d\|_\infty}{(\alpha-1)\min_{x \in \Omega}d(x)}\int_\Omega |\nabla\mu|^3\unknown\, dx
        \\
        &\quad
        -\frac{2\|\nabla d\|_\infty \|\nabla\phi\|_\infty}{(\alpha-1)\min_{x \in \Omega}d(x)}\D[\unk](t).    
    \end{split}
\end{equation}

We need to handle a cubic nonlinearity in the right-hand side of \eqref{eq:2.estimate_I7}. Since $\unknown$ is bounded and strictly positive,  we can use the following Sobolev-Poincar\'e type inequality.
\begin{prop}%[Sobolev-Poincare type inequality]
\label{Sobolev}
    Let $\unknown$ be a bounded,  strictly positive classical solution of 
    \eqref{Nonlinear-Fokker-Planck} on $\overline{\Omega}\times [0, \infty)$. Then,  there is a suitable positive constant $\Cl{const:sobolevtype}>0$ depending only of $n$ and $\Omega$ such that for any vector field $\bv\in C^1(\Omega)$, 
    \begin{equation}
    \label{Sobolev-Poincare}
        \left(\int_\Omega |\bv-\overline{\bv}|^{p*}\unknown\, dx\right)^{\frac{1}{p*}}\le \Cr{const:sobolevtype}\left(\int_\Omega |\nabla\bv|^2\unknown\, dx\right)^{\frac{1}{2}}, 
    \end{equation}
    where $\overline{\bv}$ is the integral average of $\bv$ and the $p*$ is an exponent satisfying $\frac{1}{p*}=\frac{1}{2}-\frac{1}{n}$ for $n\ge3$ and arbitrary $2 \le p* < \infty$ for $n=1, 2$.
\end{prop}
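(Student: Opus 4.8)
The plan is to deduce \eqref{Sobolev-Poincare} from the classical (unweighted) Sobolev--Poincar\'e inequality on $\Omega$, exploiting only that the weight $\unknown$ is bounded above and bounded below away from zero, uniformly in time: since $\unknown$ is assumed to be a bounded, strictly positive classical solution, there are constants $0<m\le M<\infty$ with $m\le \unknown(x,t)\le M$ for all $x\in\overline\Omega$, $t\ge 0$ (in the situation of Theorem~\ref{theorem} one takes $m=\Cr{const:f>c}$, $M=\Cr{const:f<c}$). Conceptually this just says that the weighted spaces $L^{p*}(\Omega,\unknown\,dx)$ and $W^{1,2}(\Omega,\unknown\,dx)$ coincide, with equivalent norms, with the ordinary Lebesgue/Sobolev spaces, so the weighted embedding is a corollary of the unweighted one; convexity of $\Omega$ plays no role here (only boundedness and smoothness of the boundary).

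Concretely I would argue in three steps. First, recall the standard Sobolev--Poincar\'e inequality on the bounded smooth domain $\Omega$: there is $C_0=C_0(n,\Omega)>0$ such that, for every $\bv\in W^{1,2}(\Omega)$ (the inequality being applied componentwise, with the resulting dimensional factors absorbed into $C_0$),
\[
 \Big(\int_\Omega |\bv-\langle\bv\rangle|^{p*}\,dx\Big)^{1/p*}\le C_0\Big(\int_\Omega |\nabla\bv|^2\,dx\Big)^{1/2},\qquad \langle\bv\rangle:=\frac{1}{|\Omega|}\int_\Omega \bv\,dx,
\]
where $p*$ is exactly the exponent in the statement; this uses $W^{1,2}\hookrightarrow L^{2n/(n-2)}$ for $n\ge 3$, $W^{1,2}\hookrightarrow L^q$ for all $q<\infty$ when $n=2$, and $W^{1,2}\hookrightarrow C^{0,1/2}(\overline\Omega)$ when $n=1$, and it is the only place the finiteness of $p*$ is invoked (no restriction on $n$ is needed at this stage; we may also assume the right-hand side of \eqref{Sobolev-Poincare} is finite, so that $\bv\in W^{1,2}(\Omega)$). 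Second, on the left-hand side use $\unknown\le M$; if $\overline{\bv}$ denotes the $\unknown\,dx$-average $\int_\Omega\bv\,\unknown\,dx$ (recall $\int_\Omega\unknown\,dx=1$ by Lemma~\ref{lem;f is PDF}) rather than $\langle\bv\rangle$, first absorb the difference via Jensen's inequality for the probability measure $\unknown\,dx$,
\[
 |\overline{\bv}-\langle\bv\rangle|=\Big|\int_\Omega(\bv-\langle\bv\rangle)\,\unknown\,dx\Big|\le \Big(\int_\Omega |\bv-\langle\bv\rangle|^{p*}\unknown\,dx\Big)^{1/p*},
\]
which gives $\int_\Omega |\bv-\overline{\bv}|^{p*}\unknown\,dx\le 2^{p*}\int_\Omega |\bv-\langle\bv\rangle|^{p*}\unknown\,dx\le 2^{p*}M\int_\Omega |\bv-\langle\bv\rangle|^{p*}\,dx$. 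Third, on the right-hand side use $\unknown\ge m$ to get $\int_\Omega |\nabla\bv|^2\,dx\le m^{-1}\int_\Omega |\nabla\bv|^2\unknown\,dx$. Chaining these three estimates with the classical inequality yields \eqref{Sobolev-Poincare} with $\Cr{const:sobolevtype}=2\,C_0(n,\Omega)\,M^{1/p*}m^{-1/2}$, which is manifestly independent of $t$.

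There is no serious obstacle; the argument is routine once the uniform two-sided bounds on $\unknown$ are in hand. The only point worth flagging is the dependence of $\Cr{const:sobolevtype}$: the proof produces a constant depending also on the uniform bounds $m,M$ of $\unknown$, not on $n$ and $\Omega$ alone. This is harmless for the subsequent use of the proposition, where $\unknown$ is the fixed solution of Theorem~\ref{theorem} and one may take $m=\Cr{const:f>c}$, $M=\Cr{const:f<c}$, so that $\Cr{const:sobolevtype}$ depends only on $n$, $\Omega$, $\Cr{const:f>c}$, $\Cr{const:f<c}$ --- all admissible parameters for the main theorem --- and, crucially, is uniform in $t$.
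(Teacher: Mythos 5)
Your proposal is correct and follows essentially the same route as the paper: apply the classical unweighted Sobolev--Poincar\'e inequality, then use the upper bound of $\unknown$ on the left and the lower bound on the right, yielding a constant of the form $\Cr{const:f<c}^{1/p*}C_0/\Cr{const:f>c}^{1/2}$. Your remark that the constant actually depends on $\Cr{const:f>c}$ and $\Cr{const:f<c}$ as well (not only on $n$ and $\Omega$ as the statement claims) is accurate and also applies to the paper's own proof; the extra Jensen step for the weighted average is unnecessary since the paper takes $\overline{\bv}$ to be the Lebesgue average, but it is harmless.
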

\begin{proof}
    Since $p*$ is the Sobolev exponent,  it follows from the Sobolev-Poincar\'e inequality (\cite{MR1814364}*{p.174}, \cite{MR1817225}*{Theorem4.3}) that
    \begin{equation*}
        \left(\int_\Omega |\bv-\overline{\bv}|^{p*}\, dx\right)^{\frac{1}{p*}}\le \Cl{const:Sobolev}\left(\int_\Omega |\nabla\bv|^2\, dx\right)^{\frac{1}{2}}
    \end{equation*}
    holds for any vector field $\bv \in C^1(\Omega)$. By the definition of $\Cr{const:f>c}$,  $\Cr{const:f<c}$,  we obtain that
    \begin{equation*}
        \begin{split}
            \left(\int_\Omega |\bv-\overline{\bv}|^{p*}\unknown\, dx\right)^{\frac{1}{p*}}
            &\le
            \Cr{const:f<c}^{\frac{1}{p*}}\left(\int_\Omega |\bv-\overline{\bv}|^{p*}\, dx\right)^{\frac{1}{p*}}\\
            &\le 
            \Cr{const:f<c}^{\frac{1}{p*}}\Cr{const:Sobolev}\left(\int_\Omega |\nabla\bv|^2\, dx\right)^{\frac{1}{2}}
            \le 
            \frac{\Cr{const:f<c}^{\frac{1}{p*}}\Cr{const:Sobolev}}{\Cr{const:f>c}^{\frac{1}{2}}}\left(\int_\Omega |\nabla\bv|^2\unknown\, dx\right)^{\frac{1}{2}}.
        \end{split}
    \end{equation*}
\end{proof}

We prove an interpolation inequality from the Sobolev-Poincar\'e type inequality \eqref{Sobolev-Poincare}.
\begin{prop}
\label{prop:|v|^3}
    Let $n=1, 2, 3$. Let $\unknown$ be a bounded, strictly positive solution of \eqref{Nonlinear-Fokker-Planck} on $\overline{\Omega}\times [0, \infty)$. Then,  there are constants $\Cl{const:|v|^3->|nablav|^2}$,  $\Cl{const:|v|^3-|v|^2, 1}$,  and $\Cl{const:|v|^3-|v|^2, 2}>0$ such that for any $\bv\in C^1(\Omega)$, 
    \begin{equation}
        \label{eq:2.Interpolation}
            \int_\Omega |\bv|^3\unknown\, dx
	    \le
	    \Cr{const:|v|^3->|nablav|^2}
	    \int_\Omega |\nabla\bv|^2\unknown\, dx
	    +
	    \Cr{const:|v|^3-|v|^2, 1}
	    \left(\int_\Omega |\bv|^2\unknown\, dx\right)^3
            +
	    \Cr{const:|v|^3-|v|^2, 2}
	    \left(\int_\Omega |\bv|^2\unknown\, dx\right)^{\frac{3}{2}}.
    \end{equation}
\end{prop}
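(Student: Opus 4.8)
The plan is to reduce the estimate to the Sobolev--Poincar\'e inequality \eqref{Sobolev-Poincare} of Proposition~\ref{Sobolev} by separating $\bv$ into its spatial mean and its fluctuation. Throughout I would take the exponent $p* = 6$, which is admissible for $n = 1,2,3$: it is exactly the Sobolev exponent when $n = 3$, and any finite exponent (in particular $6$) is permitted when $n = 1,2$. Writing $w := \bv - \overline{\bv}$, so that $\nabla w = \nabla \bv$, the elementary inequality $(a+b)^3 \le 4(a^3 + b^3)$ for $a,b \ge 0$ together with $\int_\Omega \unknown\, dx = 1$ gives
\begin{equation*}
  \int_\Omega |\bv|^3 \unknown\, dx
  \le
  4\int_\Omega |w|^3 \unknown\, dx
  +
  4|\overline{\bv}|^3 ,
\end{equation*}
so it suffices to bound the fluctuation integral and the mean separately.

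For the fluctuation term I would interpolate $L^3(\unknown\, dx)$ between $L^2(\unknown\, dx)$ and $L^6(\unknown\, dx)$: since $\tfrac13 = \tfrac12\cdot\tfrac12 + \tfrac12\cdot\tfrac16$, H\"older's inequality yields $\|w\|_{L^3(\unknown\,dx)} \le \|w\|_{L^2(\unknown\,dx)}^{1/2}\|w\|_{L^6(\unknown\,dx)}^{1/2}$, hence
\begin{equation*}
  \int_\Omega |w|^3 \unknown\, dx
  \le
  \left(\int_\Omega |w|^2\unknown\, dx\right)^{3/4}
  \left(\int_\Omega |w|^6\unknown\, dx\right)^{1/4} .
\end{equation*}
Then \eqref{Sobolev-Poincare} bounds $\left(\int_\Omega |w|^6\unknown\, dx\right)^{1/4} \le \Cr{const:sobolevtype}^{3/2}\left(\int_\Omega |\nabla\bv|^2\unknown\, dx\right)^{3/4}$, and Young's inequality with conjugate exponents $\tfrac43$ and $4$ — chosen precisely so that the gradient integral enters to the first power — gives
\begin{equation*}
  \int_\Omega |w|^3 \unknown\, dx
  \le
  \tfrac34 \int_\Omega |\nabla\bv|^2\unknown\, dx
  +
  \tfrac14 \Cr{const:sobolevtype}^{6}\left(\int_\Omega |w|^2\unknown\, dx\right)^3 .
\end{equation*}
It remains to trade $\int_\Omega|w|^2\unknown\,dx$ for $\int_\Omega|\bv|^2\unknown\,dx$ and to estimate $|\overline{\bv}|$, and both follow from the crude chain $|\overline{\bv}| \le |\Omega|^{-1}\int_\Omega|\bv|\,dx \le (|\Omega|\Cr{const:f>c})^{-1}\int_\Omega|\bv|\unknown\,dx \le (|\Omega|\Cr{const:f>c})^{-1}\left(\int_\Omega|\bv|^2\unknown\,dx\right)^{1/2}$, using $\unknown \ge \Cr{const:f>c}$, Cauchy--Schwarz with weight $\unknown$, and $\int_\Omega\unknown\,dx = 1$. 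This produces $\int_\Omega|w|^2\unknown\,dx \le 2\int_\Omega|\bv|^2\unknown\,dx + 2|\overline{\bv}|^2 \le C\int_\Omega|\bv|^2\unknown\,dx$ and $|\overline{\bv}|^3 \le C\left(\int_\Omega|\bv|^2\unknown\,dx\right)^{3/2}$; substituting into the splitting yields \eqref{eq:2.Interpolation} with constants $\Cr{const:|v|^3->|nablav|^2}$, $\Cr{const:|v|^3-|v|^2, 1}$, $\Cr{const:|v|^3-|v|^2, 2}$ depending only on $n$, $\Omega$, $\Cr{const:f>c}$ (through $\Cr{const:sobolevtype}$).

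There is no deep analytic obstacle — the argument is interpolation plus Young's inequality — but the step that genuinely consumes the hypothesis $n = 1,2,3$ is the exponent bookkeeping in the Young estimate: one needs the gradient integral to appear with a power strictly less than $1$ so that it can be split off \emph{additively} rather than multiplicatively, and with $p* = 6$ this power is $\tfrac34$, whereas for $n \ge 4$ the Sobolev exponent drops below $6$ and this breaks down. A second point to keep in view is that $\bv$ is only $C^1(\Omega)$ with no boundary condition, so the Poincar\'e correction by $\overline{\bv}$ cannot be omitted; this is exactly the source of the mean term $|\overline{\bv}|^3$, and hence of the $\left(\int_\Omega|\bv|^2\unknown\,dx\right)^{3/2}$ summand, in the final inequality.
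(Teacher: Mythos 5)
Your argument is correct and follows essentially the same route as the paper: both rest on the weighted Sobolev--Poincar\'e inequality of Proposition~\ref{Sobolev} with $p^*=6$, an $L^2$--$L^6$ interpolation of the $L^3$ norm, Young's inequality with conjugate exponents $\tfrac43$ and $4$, and the same chain of estimates for $|\overline{\bv}|$ via $\unknown\ge\Cr{const:f>c}$ and $\int_\Omega\unknown\,dx=1$; your version merely performs the mean/fluctuation split before interpolating rather than inside a H\"older factor, which yields the identical three-term bound with the same powers $1$, $3$, and $\tfrac32$. Your closing observation about why $n\ge4$ fails (the gradient integral would enter with power exactly $1$ at $n=4$, so it cannot be split off additively) matches the paper's constraint $p^*/(2p)<1$.
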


\begin{remark}
 \label{rem:2.dependency_constant_Interpolation} Note that the constants
 $\Cr{const:|v|^3->|nablav|^2}$,  $\Cr{const:|v|^3-|v|^2, 1}$,  and
 $\Cr{const:|v|^3-|v|^2, 2}$ are independent of $\Cr{const:D_min}$,  the
 lower bounds of $d$. These constants depend on
 $\Cr{const:f>c}$ and $\Cr{const:f<c}$,  the lower bounds and the upper
 bounds of $\unk$, nevertheless solution $\unk$ of
 \eqref{Nonlinear-Fokker-Planck} may depend on the diffusion coefficient
 $d$. Here,  we regard $\Cr{const:f>c}$ and $\Cr{const:f<c}$ independent
 of $d$. We will comment on this relation later.
\end{remark}

\begin{proof}
    Let $a, b>0$ such that $a+b=1$,  and let $p>1$ satisftying $3ap\geq1$. Then by H\"{o}lder's and convex inequality, 
    \begin{equation}
    \label{|v|^3f'sholder}
        \begin{split}
            \int_\Omega |\bv|^3\unknown\, dx&\le \left(\int_\Omega |\bv|^{3ap}\unknown\, dx \right)^{\frac{1}{p}}\left(\int_\Omega |\bv|^{3bp'}\unknown\, dx\right)^{\frac{1}{p'}} \\
            &=\left(\int_\Omega |\bv-\overline{\bv}+\overline{\bv}|^{3ap}\unknown\, dx \right)^{\frac{1}{p}}\left(\int_\Omega |\bv|^{3bp'}\unknown\, dx\right)^{\frac{1}{p'}} \\
            &\le 2^{\frac{3ap-1}{p}}\Biggl(\left(\int_\Omega |\bv-\overline{\bv}|^{3ap}\unknown\, dx \right)^{\frac{1}{p}}
            \\
            &\quad
            +\left(\int_\Omega |\overline{\bv}|^{3ap}\unknown\, dx \right)^{\frac{1}{p}}\Biggr)\left(\int_\Omega |\bv|^{3bp'}\unknown\, dx\right)^{\frac{1}{p'}}, 
        \end{split}
    \end{equation}
    where $p'$ is the H\"{o}lder dual index of $p$. Next, we set $3ap=p*\geq1$. By Proposition \ref{Sobolev}, 
    \begin{equation*}
        \left(\int_\Omega |\bv-\overline{\bv}|^{3ap}\unknown\, dx \right)^{\frac{1}{p}}
        \le 
        \Cr{const:sobolevtype}^{\frac{p*}{p}}\left(\int_\Omega |\nabla\bv|^{2}\unknown\, dx \right)^{\frac{p*}{2p}}.
    \end{equation*}
    We take $3bp'=2$ and $\frac{p*}{2p}<1$. Then,  by using Young's inequality, 
    \begin{equation*}
    \begin{split}
        \left(\int_\Omega |\nabla\bv|^{2}\unknown\, dx \right)^{\frac{p*}{2p}}\left(\int_\Omega |\bv|^{2}\unknown\, dx\right)^{\frac{1}{p'}}
            &
            \le
            \frac{p*}{2p}\int_\Omega |\nabla\bv|^2\unknown\, dx
            \\
            &\quad
            +\left(1-\frac{p*}{2p}\right)\left(\int_\Omega |\bv|^2\unknown\, dx\right)^{\frac{1}{p'}\left(1-\frac{p*}{2p}\right)^{-1}}.
    \end{split}
    \end{equation*}
    Note from \eqref{eq;apripri} that $\Cr{const:f>c}$ is the minimum of $\unknown$ on $\overline{\Omega} \times [0, \infty)$. Then,  from H\"{o}lder's inequality and \eqref{eq; f_is_PDF} that
    \begin{equation*}
        \begin{split}
            \left(\int_\Omega |\overline{\bv}|^{p*}\unknown\, dx \right)^{\frac{1}{p}}
            =|\overline{\bv}|^{\frac{p*}{p}}
            &\le
            \left(\frac{1}{|\Omega|}\int_\Omega |\bv|\, dx\right)^{\frac{p*}{p}}
            \\
            &\le
            \left(\frac{1}{|\Omega|\Cr{const:f>c}}\int_\Omega |\bv|\unknown\, dx\right)^{\frac{p*}{p}}
            \\
            &\le 
            \left(\frac{1}{|\Omega|\Cr{const:f>c}}\right)^{\frac{p*}{p}}\left(\int_\Omega |\bv|^2\unknown\, dx\right)^{\frac{p*}{2p}}.
        \end{split}
    \end{equation*}
    Therefore subsituting the above inequality to \eqref{|v|^3f'sholder},  we obtain
    \begin{equation*}
        \begin{split}
            \int_\Omega |\bv|^3\unknown\, dx
            &
            \le 2^{\frac{3ap-1}{p}}\Cr{const:sobolevtype}^{\frac{p*}{p}}\frac{p*}{2p}\int_\Omega |\nabla\bv|^2\unknown\, dx
            \\
            &\quad
            +2^{\frac{3ap-1}{p}}
            \Cr{const:sobolevtype}^{\frac{p*}{p}}\left(1-\frac{p*}{2p}\right)\left(\int_\Omega |\bv|^2\unknown\, dx\right)^{\frac{1}{p'}\left(1-\frac{p*}{2p}\right)^{-1}} 
            \\
            &\quad
            +2^{\frac{3ap-1}{p}}\left(\frac{1}{|\Omega|\Cr{const:f>c}}\right)^{\frac{p*}{p}}\left(\int_\Omega |\bv|^2\unknown\, dx\right)^{\frac{p*}{2p}+\frac{1}{p'}}.
        \end{split}
    \end{equation*}
    Next, we check the constraints' condition. If $n\ge3$,  note that $a+b=1$ and $p'$ is the H\"{o}lder dual index of $p$,  $3ap=p*$ and $p*$ is the Sobolev exponent. Then,  we get
    \begin{equation*}
            1=\frac{1}{p}+\frac{1}{p'}=\frac{3a}{p*}+\frac{3b}{2}=\frac{3}{2}-\frac{3a}{n}.
    \end{equation*}
    Thus,  we obtain $a=\frac{n}{6}$. Combining $\frac{P*}{2p}<1$ and $3ap=p*$,  we deduce $a<\frac{2}{3}$ hence $n<4$,  which means $n=3$. If $n=1, 2$,  we put $p*=6$. Then we deduce from $3ap=6$,  $3bp'=2$ that
    \begin{equation}
        1=\frac{1}{p}+\frac{1}{p'}=\frac{a}{2}+\frac{3}{2}=\frac{1}{2}+b, 
    \end{equation}
    thus $a=b=\frac{1}{2}$,  $p=4$,  $p'=\frac{4}{3}$,  and $\frac{1}{p'}(1-\frac{p*}{2p})^{-1}$.
    \begin{equation*}
        a=b=\frac{1}{2}, \ 3bp'=2\Leftrightarrow p'=\frac{4}{3}, \  p=4, \  p*=6, \  \frac{1}{p'}\left(1-\frac{p*}{2p}\right)^{-1}=3.
    \end{equation*}
    Using the above results,  we obtain \eqref{eq:2.Interpolation},  where
    \begin{equation*}
            \Cr{const:|v|^3->|nablav|^2}:=2^{-\frac{3}{4}}3\Cr{const:sobolevtype}^{\frac{3}{2}}, \quad
            \Cr{const:|v|^3-|v|^2, 1}:=2^{-\frac{3}{4}}\Cr{const:sobolevtype}^{\frac{3}{2}},  \quad
            \Cr{const:|v|^3-|v|^2, 2}:=2^{\frac{5}{4}}\left(\frac{1}{|\Omega|\Cr{const:f>c}}\right)^{\frac{3}{2}}.
    \end{equation*}
\end{proof}

Using Lemma \ref{lem:I5},  \ref{lem:I6},  \ref{lem:I7} and Proposition \ref{prop:|v|^3} to \eqref{second-derivative-F},  we obtain the following estimate:
\begin{lem}
\label{d^2F/dt^2}
 Let $n=1, 2, 3$.
    Let $\unknown$ be a bounded,  positive classical solution of \eqref{Nonlinear-Fokker-Planck} on $\overline{\Omega}\times [0, \infty)$. 
 Then,  there are constants $\Cl{const:f, nablaD, nablaphi}$, $\Cl{const:f, nablaD, nablaphi1}$, $\Cl{const:f, nablaD, nablaphi2}$ and $\Cl{const:f, nablaD, nablaphi3}>0$ depending only on $\|\nabla d\|_\infty$,  $\|\nabla \phi\|_\infty$,  $\| \unk\|_\infty$,  $n$,  $\alpha$,  and $\Omega$ such that, 
    \begin{equation}
    \label{eq:2.Second_time_derivative_F_SemiFinal}
    \begin{split}
    \frac{d^2}{dt^2}\F[\unknown](t)
    &
    \ge
    \left(    
    2\lambda
    -
    \frac{\Cr{const:f, nablaD, nablaphi}}{\Cr{const:D_min}}
    \right)
    \D[\unk](t)
    +
    \left((\alpha-1)-\frac{\Cr{const:f, nablaD, nablaphi1}}{\Cr{const:D_min}^2}\right)I_2
    \\
    &
    \quad
    -
    \frac{\Cr{const:f, nablaD, nablaphi2}}{\Cr{const:D_min}}
    \left(\D[\unk](t)\right)^3
    -
    \frac{\Cr{const:f, nablaD, nablaphi3}}{\Cr{const:D_min}}
    \left(\D[\unk](t)\right)^{\frac{3}{2}}
    \end{split}
    \end{equation}
\end{lem}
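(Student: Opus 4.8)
The plan is to insert, into the lower bound \eqref{second-derivative-F} for $\frac{d^2}{dt^2}\F[\unknown](t)$, the sign of $I_1$ together with the term-by-term estimates \eqref{eq:2.Estimate_I5}, \eqref{eq:2.Estimate_I6} and \eqref{eq:2.estimate_I7} for $I_5$, $I_6$, $I_7$, and then to absorb the only genuinely nonlinear remainder — the cubic term $\int_\Omega|\nabla\mu|^3\unknown\,dx$ arising from $I_7$ — by means of the interpolation inequality of Proposition \ref{prop:|v|^3}. Everything else is a matter of collecting constants.

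First I would bound $I_1$ from below: since $\nabla^2\phi\ge\lambda I$ pointwise on $\overline{\Omega}$, the integrand obeys $\nabla\mu\cdot\nabla^2\phi\,\nabla\mu\ge\lambda|\nabla\mu|^2$, so $2I_1\ge 2\lambda\,\D[\unk](t)$. Next I would substitute \eqref{eq:2.Estimate_I5} and \eqref{eq:2.Estimate_I6} into \eqref{second-derivative-F}, using $\min_{x\in\Omega}d(x)\ge\Cr{const:D_min}$ in the denominators (legitimate, as all the quantities bounded are nonnegative). The $(\alpha-1)I_2$ produced by \eqref{eq:2.Estimate_I5} and the $2(\alpha-1)^2I_3$ produced by \eqref{eq:2.Estimate_I6} are absorbed by the $2(\alpha-1)I_2$ and $2(\alpha-1)^2I_3$ already present, so $I_3$ disappears entirely, $I_2$ survives with the temporary coefficient $(\alpha-1)$, and the coefficient of $\D[\unk](t)$ acquires a term of the form $-\Cr{const:f, nablaD, nablaphi}\,\Cr{const:D_min}^{-1}$ collecting the constants that multiply $\D[\unk](t)$ in \eqref{eq:2.Estimate_I5}, \eqref{eq:2.Estimate_I6} and (the next step) \eqref{eq:2.estimate_I7}. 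Finally I would insert \eqref{eq:2.estimate_I7}, again replacing $\min d$ by $\Cr{const:D_min}$; its $\|\nabla d\|_\infty\|\nabla\phi\|_\infty$ part joins the same $\D[\unk](t)$ coefficient, leaving the cubic remainder $-\frac{2\|\nabla d\|_\infty}{(\alpha-1)\Cr{const:D_min}}\int_\Omega|\nabla\mu|^3\unknown\,dx$.

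To this cubic remainder I would apply Proposition \ref{prop:|v|^3} with the vector field $\bv=\nabla\mu\in C^1(\Omega)$ (admissible since $\unknown,d,\phi\in C^2$), using $|\nabla\bv|^2=|\nabla^2\mu|^2$ and $\int_\Omega|\nabla\mu|^2\unknown\,dx=\D[\unk](t)$:
\begin{equation*}
\int_\Omega|\nabla\mu|^3\unknown\,dx\le\Cr{const:|v|^3->|nablav|^2}\int_\Omega|\nabla^2\mu|^2\unknown\,dx+\Cr{const:|v|^3-|v|^2, 1}\bigl(\D[\unk](t)\bigr)^{3}+\Cr{const:|v|^3-|v|^2, 2}\bigl(\D[\unk](t)\bigr)^{\frac{3}{2}}.
\end{equation*}
Then I would convert $\int_\Omega|\nabla^2\mu|^2\unknown\,dx$ into $I_2$: since $d(x)\ge\Cr{const:D_min}$ and $\unknown^{\alpha}=\unknown^{\alpha-1}\unknown\ge\Cr{const:f>c}^{\alpha-1}\unknown$ (using $\unknown\ge\Cr{const:f>c}$ and $\alpha>1$), one has $\int_\Omega|\nabla^2\mu|^2\unknown\,dx\le(\Cr{const:D_min}\Cr{const:f>c}^{\alpha-1})^{-1}I_2$. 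Substituting, the $\nabla^2\mu$ contribution becomes $-\Cr{const:f, nablaD, nablaphi1}\,\Cr{const:D_min}^{-2}I_2$, the cubic term becomes $-\Cr{const:f, nablaD, nablaphi2}\,\Cr{const:D_min}^{-1}(\D[\unk](t))^{3}$ and the $3/2$-term becomes $-\Cr{const:f, nablaD, nablaphi3}\,\Cr{const:D_min}^{-1}(\D[\unk](t))^{\frac{3}{2}}$; adding $(\alpha-1)I_2$ to $-\Cr{const:f, nablaD, nablaphi1}\,\Cr{const:D_min}^{-2}I_2$ gives the coefficient $(\alpha-1)-\Cr{const:f, nablaD, nablaphi1}/\Cr{const:D_min}^{2}$, and \eqref{eq:2.Second_time_derivative_F_SemiFinal} follows. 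By Remark \ref{rem:2.dependency_constant_Interpolation} and the form of the estimates used, all these constants depend only on $\|\nabla d\|_\infty$, $\|\nabla\phi\|_\infty$, $\|\unknown\|_\infty$, $n$, $\alpha$, $\Omega$ and the bounds $\Cr{const:f>c},\Cr{const:f<c}$ of \eqref{eq;apripri}, and not on $\Cr{const:D_min}$.

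The computation is essentially bookkeeping, but two points deserve attention. The first is keeping track of which negative power of $\Cr{const:D_min}$ accompanies each correction: only the $I_2$ term carries $\Cr{const:D_min}^{-2}$, and it does so precisely because the passage from $\int_\Omega|\nabla^2\mu|^2\unknown\,dx$ to $I_2$ uses one lower bound on $d$ on top of the $\Cr{const:D_min}^{-1}$ already standing in front of the cubic remainder, whereas every other correction retains only $\Cr{const:D_min}^{-1}$. The second, and the real content of the lemma, is verifying that none of the absorbed constants secretly depends on $\Cr{const:D_min}$ — this is exactly what will allow the coefficients $2\lambda-\Cr{const:f, nablaD, nablaphi}/\Cr{const:D_min}$ and $(\alpha-1)-\Cr{const:f, nablaD, nablaphi1}/\Cr{const:D_min}^{2}$ to be made positive in the next step by enlarging $\Cr{const:D_min}$.
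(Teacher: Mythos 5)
Your proposal is correct and follows essentially the same route as the paper's proof: substitute the estimates \eqref{eq:2.Estimate_I5}, \eqref{eq:2.Estimate_I6}, \eqref{eq:2.estimate_I7} into \eqref{second-derivative-F}, bound $2I_1\ge 2\lambda\,\D[\unk](t)$ via $\nabla^2\phi\ge\lambda I$, and absorb the cubic remainder with Proposition \ref{prop:|v|^3} applied to $\bv=\nabla\mu$, converting $\int_\Omega|\nabla^2\mu|^2\unknown\,dx$ into $I_2$ at the cost of one further factor of $\Cr{const:D_min}^{-1}$. Your explicit use of the lower bound $\unknown^{\alpha-1}\ge\Cr{const:f>c}^{\alpha-1}$ in that last conversion is, if anything, slightly more careful than the paper's ``using $d\geq\Cr{const:D_min}$''.
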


\begin{remark}
\label{rem3}
 Again,  we note as Remark \ref{rem:2.dependency_constant_Interpolation}
 that the constants $\Cr{const:f, nablaD, nablaphi}$, 
 $\Cr{const:f, nablaD, nablaphi1}$, $\Cr{const:f, nablaD, nablaphi2}$ and
 $\Cr{const:f, nablaD, nablaphi3}>0$ are not depending on
 $\Cr{const:D_min}$,  the lower bounds of $d$.  
\end{remark}

\begin{proof}
Plugging \eqref{eq:2.Estimate_I5},  \eqref{eq:2.Estimate_I6},  and \eqref{eq:2.estimate_I7} into \eqref{second-derivative-F} and using $\Cr{const:D_min}=\min_{x\in\Omega}d(x)$,  we can estimate the second time derivative
of $\F$ as
\begin{equation}
\label{Second-Order-Derivative_of_F}
    \frac{d^2}{dt^2}\F[\unknown](t)
    \ge
    2I_1
    +
    (\alpha-1)I_2
    -
    \frac{\Cr{const:f, nablaD, nablaphi}}{\Cr{const:D_min}}
    \D[\unk](t)
    -
    \frac{2\|\nabla d\|_\infty}{(\alpha-1)\Cr{const:D_min}}\int_\Omega |\nabla\mu|^3\unknown\, dx, 
\end{equation}
where
\begin{equation}
    \Cr{const:f, nablaD, nablaphi}
    :=
    \frac{(2\alpha-1)^2\|\nabla d\|_\infty^2\|\unknown^{\alpha-1}\|_\infty}{(\alpha-1)^2}
    +\frac{\|\nabla d\|_\infty^2\|\unknown^{\alpha-1}\|_\infty}{(\alpha-1)}
    +\frac{2\|\nabla d\|_\infty\|\nabla \phi\|_\infty}{(\alpha-1)}.
\end{equation}

From proposition \ref{prop:|v|^3} with $\bv=\nabla\mu$ and using $d\geq\Cr{const:D_min}$,  we have
\begin{equation}
\label{|nablamu|^3}
            \int_\Omega |\nabla \mu|^3\unknown\, dx
            \le 
            \frac{\Cr{const:|v|^3->|nablav|^2}}{\Cr{const:D_min}}I_2
            +
            \Cr{const:|v|^3-|v|^2, 1}\left(\D[\unk](t)\right)^3
            +\Cr{const:|v|^3-|v|^2, 2}\left(\D[\unk](t)\right)^{\frac{3}{2}}.
\end{equation}
Plugging \eqref{|nablamu|^3} into \eqref{Second-Order-Derivative_of_F},  we obtain
\begin{equation} 
    \label{eq:2.Second-Order-Derivative_of_F_2}
    \begin{split}
    \frac{d^2}{dt^2}\F[\unknown](t)
    &
    \ge
    2I_1
    +
    \left((\alpha-1)-\frac{\Cr{const:f, nablaD, nablaphi1}}{\Cr{const:D_min}^2}\right)I_2
    -
    \frac{\Cr{const:f, nablaD, nablaphi}}{\Cr{const:D_min}}
    \D[\unk](t)\\
    &\quad
    -\frac{\Cr{const:f, nablaD, nablaphi2}}{\Cr{const:D_min}}
    \left(\D[\unk](t)\right)^3
    -\frac{\Cr{const:f, nablaD, nablaphi3}}{\Cr{const:D_min}}
    \left(\D[\unk](t)\right)^{\frac{3}{2}}, 
    \end{split}
    \end{equation}
where
\begin{equation}
    \begin{split}
        \Cr{const:f, nablaD, nablaphi1}&:=\frac{2\|\nabla d\|_\infty}{(\alpha-1)}\Cr{const:|v|^3->|nablav|^2}, \quad
        \Cr{const:f, nablaD, nablaphi2}:=\frac{2\|\nabla d\|_\infty}{(\alpha-1)}\Cr{const:|v|^3-|v|^2, 1}, \quad
        \Cr{const:f, nablaD, nablaphi3}:=\frac{2\|\nabla d\|_\infty}{(\alpha-1)}\Cr{const:|v|^3-|v|^2, 2}.
    \end{split}
\end{equation}
Since $\nabla^2 \phi\ge \lambda I$,  $I_1$ can be estimated by
\begin{equation}
    \label{eq:2.Estimate.Hesse.potential}
    2I_1
    \ge
    2\lambda \int_\Omega |\nabla\mu|^2\unknown\, dx
    =
    2\lambda \D[\unk](t)
\end{equation}
Therefore plugging \eqref{eq:2.Estimate.Hesse.potential} into
\eqref{eq:2.Second-Order-Derivative_of_F_2},  we obtain \eqref{eq:2.Second_time_derivative_F_SemiFinal}.
\end{proof}

Now,  we take $\Cr{const:D_min}$ large enough to control the coefficient
of the first and second terms of
\eqref{eq:2.Second_time_derivative_F_SemiFinal}.

\begin{lem}
 \label{D>0} Let $n=1, 2, 3$.  Let $\unknown$ be a bounded,  positive
 classical solution of \eqref{Nonlinear-Fokker-Planck} on
 $\overline{\Omega}\times [0, \infty)$. Then,  there is a large enough number $\Cr{const:D_min}>0$ such that if $d(x) >\Cr{const:D_min}$ on $x
 \in \Omega$,  then there exists constants $\Cl{const:|nablamu|^2f^3/2}$, 
 $\Cl{const:|nablamu|^2f^3}>0$ depending only on $\|\nabla d\|_\infty$, 
 $\|\nabla \phi\|_\infty$,  $\| \unknown\|_\infty$,  $n$,  $\alpha$,  and
 $\Omega$ such that
 \begin{equation}
  \label{eq:2.Second_time_derivative_F_Final}
  \frac{d^2}{dt^2}\F[\unknown](t)
    \ge
    \lambda
    \D[\unk](t)
    -
    \Cr{const:|nablamu|^2f^3/2}
    \left(\D[\unk](t)\right)^3
    -
    \Cr{const:|nablamu|^2f^3}
    \left(\D[\unk](t)\right)^{\frac{3}{2}}
    \quad
    t>0.
 \end{equation}
\end{lem}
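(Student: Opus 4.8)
The plan is to derive \eqref{eq:2.Second_time_derivative_F_Final} from the lower bound \eqref{eq:2.Second_time_derivative_F_SemiFinal} of Lemma~\ref{d^2F/dt^2} by a threshold argument: choose $\Cr{const:D_min}$ large enough, in terms of $\lambda$, $\alpha$ and the constants $\Cr{const:f, nablaD, nablaphi}$, $\Cr{const:f, nablaD, nablaphi1}$ supplied by that lemma, so that the first two coefficients on the right-hand side of \eqref{eq:2.Second_time_derivative_F_SemiFinal} become favourable, while the coefficients of $(\D[\unk](t))^3$ and $(\D[\unk](t))^{3/2}$ are bounded by quantities not involving $\Cr{const:D_min}$.

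First I would note that $I_2=\int_\Omega d(x)\,|\nabla^2\mu|^2\unknown^\alpha\,dx\ge0$, since $d>0$ and $\unknown>0$; hence, once the coefficient $(\alpha-1)-\Cr{const:f, nablaD, nablaphi1}/\Cr{const:D_min}^2$ in \eqref{eq:2.Second_time_derivative_F_SemiFinal} is nonnegative, the $I_2$ term may be discarded, which holds as soon as $\Cr{const:D_min}\ge\sqrt{\Cr{const:f, nablaD, nablaphi1}/(\alpha-1)}$. Second, using $\D[\unk](t)\ge0$, the coefficient $2\lambda-\Cr{const:f, nablaD, nablaphi}/\Cr{const:D_min}$ of $\D[\unk](t)$ is at least $\lambda$ provided $\Cr{const:D_min}\ge\Cr{const:f, nablaD, nablaphi}/\lambda$. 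Third, again using $\D[\unk](t)\ge0$, if in addition $\Cr{const:D_min}\ge1$ then $\Cr{const:f, nablaD, nablaphi2}/\Cr{const:D_min}\le\Cr{const:f, nablaD, nablaphi2}$ and $\Cr{const:f, nablaD, nablaphi3}/\Cr{const:D_min}\le\Cr{const:f, nablaD, nablaphi3}$, so enlarging those two coefficients only weakens the inequality. Accordingly I would take
\[
\Cr{const:D_min}:=\max\!\left(1,\ \frac{\Cr{const:f, nablaD, nablaphi}}{\lambda},\ \sqrt{\frac{\Cr{const:f, nablaD, nablaphi1}}{\alpha-1}}\right),
\]
and set $\Cr{const:|nablamu|^2f^3/2}:=\Cr{const:f, nablaD, nablaphi2}$ and $\Cr{const:|nablamu|^2f^3}:=\Cr{const:f, nablaD, nablaphi3}$. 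For $d(x)>\Cr{const:D_min}$ on $\Omega$ the estimate \eqref{eq:2.Second_time_derivative_F_SemiFinal} applies with this value of $\Cr{const:D_min}$, and chaining the three observations above yields \eqref{eq:2.Second_time_derivative_F_Final}.

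There is no real analytic obstacle; the point that needs care is the dependency structure highlighted in Remark~\ref{rem3} (and Remark~\ref{rem:2.dependency_constant_Interpolation}): the constants $\Cr{const:|nablamu|^2f^3/2}$, $\Cr{const:|nablamu|^2f^3}$ must not depend on $\Cr{const:D_min}$. That is exactly why the term $1$ is kept in the maximum defining $\Cr{const:D_min}$ — it allows the factor $1/\Cr{const:D_min}$ to be replaced by $1$, so the final constants inherit dependence only on $\|\nabla d\|_\infty$, $\|\nabla\phi\|_\infty$, $\|\unknown\|_\infty$, $n$, $\alpha$, $\Omega$ via Lemma~\ref{d^2F/dt^2}, whereas $\Cr{const:D_min}$ itself is permitted to depend additionally on $\lambda$, in line with the dependency list in Theorem~\ref{theorem}.
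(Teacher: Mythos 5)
Your proposal is correct and follows essentially the same threshold argument as the paper: enlarge $\Cr{const:D_min}$ so that the coefficient of $\D[\unk](t)$ in \eqref{eq:2.Second_time_derivative_F_SemiFinal} is at least $\lambda$ and the coefficient of $I_2$ is nonnegative (so that the nonnegative term $I_2$ can be dropped), then absorb the remaining $1/\Cr{const:D_min}$ factors into the constants. Your extra requirement $\Cr{const:D_min}\ge1$, which lets you take $\Cr{const:|nablamu|^2f^3/2}=\Cr{const:f, nablaD, nablaphi2}$ and $\Cr{const:|nablamu|^2f^3}=\Cr{const:f, nablaD, nablaphi3}$ outright, is in fact slightly cleaner than the paper's choice (which retains a $1/\Cr{const:D_min}$ factor in the definitions) with respect to the stated independence of these constants from $\Cr{const:D_min}$.
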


\begin{proof}
 Let $\Cr{const:D_min}$ be large enough such that
 \begin{equation*}
  2\lambda-\frac{\Cr{const:f, nablaD, nablaphi}}{\Cr{const:D_min}}
   \ge
   \lambda, 
   \quad
   (\alpha-1)-\frac{\Cr{const:f, nablaD, nablaphi1}}{\Cr{const:D_min}}
   \ge
   0.
 \end{equation*}
 Then,  the second time derivative of $\F$ can be estimated as
 \begin{equation*}
  \frac{d^2}{dt^2}\F[\unknown](t)
    \ge
    \lambda
    \D[\unk](t)
    -
    \frac{\Cr{const:f, nablaD, nablaphi2}}{\Cr{const:D_min}}
    \left(\D[\unk](t)\right)^3
    -
    \frac{\Cr{const:f, nablaD, nablaphi3}}{\Cr{const:D_min}}
    \left(\D[\unk](t)\right)^{\frac{3}{2}}
 \end{equation*}
 Thus,  we obtain \eqref{eq:2.Second_time_derivative_F_Final} by takind
 constants as 
 \begin{equation*}
  \Cr{const:|nablamu|^2f^3/2}
   :=
   \frac{\Cr{const:f, nablaD, nablaphi2}}{\Cr{const:D_min}}, 
   \quad
   \Cr{const:|nablamu|^2f^3}
   :=
   \frac{\Cr{const:f, nablaD, nablaphi3}}{\Cr{const:D_min}}.
 \end{equation*}
\end{proof}

From differential inequality
\eqref{eq:2.Second_time_derivative_F_Final},  we use the following
Gronwall type lemma.

\begin{lemma}
 \label{lem:2.Gronwall_Lemma}
 Let $g:[0, \infty)\rightarrow\R$ be a differentialble function. Assume
 there exist positive constants $\Cl{const:Gronwall1}, 
 \Cl{const:Gronwall2}$,  and $\Cl{const:Gronwall3}>0$ such that
 \begin{equation}
  \label{ineq:gronwall_of_g}
   \frac{d}{dt}g(t)
   \le
   -
   \Cr{const:Gronwall1} g(t)
   + 
   \Cr{const:Gronwall2}
   g(t)^{\frac{3}{2}}
   +
   \Cr{const:Gronwall3}
   g(t)^3
 \end{equation}
 for any $t>0$. Then,  there exist positive constants
 $\Cl{const:Gronwall_Initial},  \Cl{const:Gronwall_Coefficient}>0$
 depending only on $\Cr{const:Gronwall1},  \Cr{const:Gronwall2}$,  and
 $\Cr{const:Gronwall3}>0$ such that if
 $g(0)<\Cr{const:Gronwall_Initial}$,  then $g(t)\leq
 \Cr{const:Gronwall_Coefficient}e^{-\Cr{const:Gronwall1}t}$.
\end{lemma}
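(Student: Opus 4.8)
The plan is to prove the statement as a ``smallness propagates and forces exponential decay'' estimate, in three stages: fix a smallness threshold $\delta$; run a continuation (barrier) argument showing that $g$ never exceeds $\delta$, which already yields exponential decay at half the optimal rate; and then bootstrap to recover the full rate $\Cr{const:Gronwall1}$. Throughout I use that $g(t)\ge0$ for all $t\ge0$, which is implicit in \eqref{ineq:gronwall_of_g} (the term $g^{3/2}$ requires it) and holds in the application, where $g=\D[\unk]\ge0$.

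First I would choose the threshold. Since $\Cr{const:Gronwall2}s^{1/2}+\Cr{const:Gronwall3}s^{2}\to0$ as $s\downarrow0$, there is $\delta>0$, depending only on $\Cr{const:Gronwall1},\Cr{const:Gronwall2},\Cr{const:Gronwall3}$, with $\Cr{const:Gronwall2}\delta^{1/2}+\Cr{const:Gronwall3}\delta^{2}\le\tfrac12\Cr{const:Gronwall1}$; set $\Cr{const:Gronwall_Initial}:=\delta$. The key observation is that on any interval where $0\le g\le\delta$, \eqref{ineq:gronwall_of_g} gives
\[
\frac{d}{dt}g(t)\le-\Cr{const:Gronwall1}g(t)+g(t)\bigl(\Cr{const:Gronwall2}g(t)^{1/2}+\Cr{const:Gronwall3}g(t)^{2}\bigr)\le-\tfrac12\Cr{const:Gronwall1}\,g(t)\le0 .
\]
Assuming $g(0)<\delta$, put $T^{*}:=\sup\{T\ge0:g(t)\le\delta\ \text{for all}\ t\in[0,T]\}$; by continuity $T^{*}>0$. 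On $[0,T^{*}]$ the displayed inequality holds, so $g$ is non-increasing there and hence $g(t)\le g(0)<\delta$; were $T^{*}<\infty$, continuity would then give $g<\delta$ slightly past $T^{*}$, contradicting maximality. Thus $T^{*}=\infty$, so $g(t)\le\delta$ for all $t$, and integrating $\tfrac{d}{dt}g\le-\tfrac12\Cr{const:Gronwall1}g$ yields $g(t)\le g(0)e^{-\Cr{const:Gronwall1}t/2}$ for all $t\ge0$.

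Finally I would bootstrap to the sharp rate. Inserting $g(s)\le g(0)e^{-\Cr{const:Gronwall1}s/2}$ back into \eqref{ineq:gronwall_of_g} and setting $h(s):=\Cr{const:Gronwall2}g(0)^{1/2}e^{-\Cr{const:Gronwall1}s/4}+\Cr{const:Gronwall3}g(0)^{2}e^{-\Cr{const:Gronwall1}s}\ge0$, one obtains $\tfrac{d}{dt}g(t)\le\bigl(-\Cr{const:Gronwall1}+h(t)\bigr)g(t)$, while $\int_0^\infty h(s)\,ds=\tfrac{4}{\Cr{const:Gronwall1}}\Cr{const:Gronwall2}g(0)^{1/2}+\tfrac{1}{\Cr{const:Gronwall1}}\Cr{const:Gronwall3}g(0)^{2}\le M$, where $M:=\tfrac{4}{\Cr{const:Gronwall1}}\Cr{const:Gronwall2}\delta^{1/2}+\tfrac{1}{\Cr{const:Gronwall1}}\Cr{const:Gronwall3}\delta^{2}$ (using $g(0)<\delta$) depends only on $\Cr{const:Gronwall1},\Cr{const:Gronwall2},\Cr{const:Gronwall3}$. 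Then
\[
\frac{d}{dt}\Bigl[g(t)\exp\!\Bigl(\Cr{const:Gronwall1}t-\int_0^{t}h(s)\,ds\Bigr)\Bigr]\le0,
\]
so $g(t)\le g(0)\exp\!\bigl(-\Cr{const:Gronwall1}t+\int_0^{t}h(s)\,ds\bigr)\le\delta e^{M}e^{-\Cr{const:Gronwall1}t}$, which is the claim with $\Cr{const:Gronwall_Coefficient}:=\delta e^{M}$. The continuation step is routine; the step that needs care is the bootstrap, since the barrier estimate only produces decay at rate $\tfrac12\Cr{const:Gronwall1}$ and recovering the advertised rate $\Cr{const:Gronwall1}$ relies crucially on the half-rate decay making the nonlinear perturbation $h$ integrable in time.
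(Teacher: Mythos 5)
Your proof is correct, and it takes a genuinely different route from the paper's. The paper substitutes $G(t)=e^{\Cr{const:Gronwall1}t}g(t)$, separates variables in the resulting inequality $G'\le e^{-\Cr{const:Gronwall1}t/2}\bigl(\Cr{const:Gronwall2}G^{3/2}+\Cr{const:Gronwall3}G^{3}\bigr)$, integrates $\int d\xi/(\Cr{const:Gronwall2}\xi^{3/2}+\Cr{const:Gronwall3}\xi^{3})$ explicitly via a partial-fraction decomposition, and reads off a uniform bound on $G(T)$ from the resulting inequality $G(T)^{-1/2}\ge G(0)^{-1/2}-\mathrm{const}$. You instead run a barrier/continuation argument to keep $g$ below a smallness threshold $\delta$, which already gives decay at rate $\Cr{const:Gronwall1}/2$, and then bootstrap, exploiting the time-integrability of the perturbation $\Cr{const:Gronwall2}g^{1/2}+\Cr{const:Gronwall3}g^{2}$ along the half-rate decay, to recover the full rate $\Cr{const:Gronwall1}$; both stages check out. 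Your argument has two small advantages: it never divides by $G^{3/2}$ (the paper's separation of variables tacitly requires $g>0$), and it yields a coefficient $\delta e^{M}$ depending only on $\Cr{const:Gronwall1},\Cr{const:Gronwall2},\Cr{const:Gronwall3}$, exactly as the lemma asserts, whereas the paper's choice $\bigl(G(0)^{-1/2}-\Cr{const:Gronwall_Initial}^{-1/2}\bigr)^{-2}$ depends on $g(0)$ and degenerates as $g(0)$ approaches $\Cr{const:Gronwall_Initial}$. The paper's method, in turn, is a single closed-form computation with no case analysis and transfers mechanically to other nonlinearities whose reciprocal is explicitly integrable.
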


\begin{proof}
 Let $G(t):=e^{\Cr{const:Gronwall1} t}g(t)$ and we will show that
 $G(T)\leq\Cr{const:Gronwall_Coefficient}$ for all $T>0$ if $G(0)=g(0)<
 \Cr{const:Gronwall_Initial} $. From \eqref{ineq:gronwall_of_g},  we
 obtain
 \begin{equation*}
  \begin{split}
   \frac{dG}{dt}
   &\le
   \Cr{const:Gronwall2} e^{\Cr{const:Gronwall1} t}g(t)^{\frac{3}{2}}
   +
   \Cr{const:Gronwall3} e^{\Cr{const:Gronwall1} t} g(t)^3 
   \\
   &=
   \Cr{const:Gronwall2} e^{-\frac{1}{2}\Cr{const:Gronwall1} t}
   G(t)^{\frac{3}{2}}
   \Cr{const:Gronwall3} e^{-2\Cr{const:Gronwall1} t} G(t)^3 
   \\
   &\leq
   e^{-\frac{1}{2}\Cr{const:Gronwall1} t}
   \left(
   \Cr{const:Gronwall2} 
   G(t)^{\frac{3}{2}}
   +
   \Cr{const:Gronwall3} G(t)^3 
   \right).
  \end{split}
 \end{equation*} 
 Thus,  we have
 \begin{equation}
  \label{ineq:dG/dt}
   \frac{1}{
   \Cr{const:Gronwall2} 
   G(t)^{\frac{3}{2}}
   +
   \Cr{const:Gronwall3} G(t)^3 
   } 
   \frac{dG}{dt}
   \le
   e^{-\frac{1}{2}\Cr{const:Gronwall1} t}.
 \end{equation}
 Integrating the differential inequality \eqref{ineq:dG/dt} with respect to $t \in [0, T]$,  we obtain
 \begin{equation}
  \label{G's_integrate}
   \int_{G(0)}^{G(T)}
   \frac{1}{
   \Cr{const:Gronwall2} 
   \xi^{\frac{3}{2}}
   +
   \Cr{const:Gronwall3} \xi^3 
   } 
   \, d\xi
   \le
   \int_0^T e^{-\frac{1}{2}\Cr{const:Gronwall1} t}\, dt
   \leq
   \frac{2}{\Cr{const:Gronwall1}}.   
 \end{equation}
 We focus on the integral on the left-hand side of
 \eqref{G's_integrate}. Decomposing the integrand of the left-hand side
 of \eqref{G's_integrate},  we obtain
 \begin{equation*}
  \frac{1}{
   \Cr{const:Gronwall2} 
   \xi^{\frac{3}{2}}
   +
   \Cr{const:Gronwall3} \xi^3 
   } 
   =
   \frac{1}{
   \Cr{const:Gronwall2} 
   \xi^{\frac{3}{2}}
   } 
   -
   \frac{\Cr{const:Gronwall3}}{
   \Cr{const:Gronwall2}
   \left(
   \Cr{const:Gronwall2} 
   +
   \Cr{const:Gronwall3} \xi^{\frac{3}{2}} 
   \right)
   }. 
 \end{equation*}
 Thus,  we have
 \begin{equation}
  \label{eq:G_split}
  \begin{split}
      \int_{G(0)}^{G(T)}
   \frac{1}{
   \Cr{const:Gronwall2} 
   \xi^{\frac{3}{2}}
   +
   \Cr{const:Gronwall3} \xi^3 
   } 
   \, d\xi
   &
   =
   \int_{G(0)}^{G(T)}
   \frac{1}{
   \Cr{const:Gronwall2} 
   \xi^{\frac{3}{2}}
   }
   \, d\xi
   \\
   &\quad
   -
   \int_{G(0)}^{G(T)}
   \frac{\Cr{const:Gronwall3}}{
   \Cr{const:Gronwall2}
   \left(
    \Cr{const:Gronwall2} 
    +
    \Cr{const:Gronwall3} \xi^{\frac{3}{2}} 
   \right)
   }
   \, d\xi
   \\
   &
   =:J_1-J_2.
  \end{split}
 \end{equation}
 Note that the integrand of $J_2$ is positive and integrable on
 $[0, \infty)$,  hence there exists a positive constant
 $\Cl{const:intG<C}>0$ such that $J_2\leq \Cr{const:intG<C}$. From
 \eqref{G's_integrate},  we have
 \begin{equation}
  J_1
   =
   \int_{G(0)}^{G(T)}
   \frac{1}{
   \Cr{const:Gronwall2} 
   \xi^{\frac{3}{2}}
   }
   \, d\xi
   \leq
   \Cr{const:intG<C}
   +
   \frac{2}{\Cr{const:Gronwall1}}.
 \end{equation}
 Compute the integration $J_1$,  we have
 \begin{equation}
  \label{ineq:G}
   G(T)^{-\frac{1}{2}}
   \geq
   G(0)^{-\frac{1}{2}}
   -
   \frac{\Cr{const:intG<C}\Cr{const:Gronwall2}}{2}
   -
   \frac{\Cr{const:Gronwall2}}{\Cr{const:Gronwall1}}.
 \end{equation}
 Here,  we assume that
 \begin{equation*}
  \label{G(0)'sassume}
   g(0)
   =
   G(0)
   <
   \Cr{const:Gronwall_Initial}
   :=
   \left(
     \frac{\Cr{const:intG<C}\Cr{const:Gronwall2}}{2}
     +
     \frac{\Cr{const:Gronwall2}}{\Cr{const:Gronwall1}}.
    \right)^{-2}
 \end{equation*}
 and define
 \begin{equation}
  \Cr{const:Gronwall_Coefficient}
   :=
   \left(
   G(0)^{-\frac{1}{2}}
   -
   \Cr{const:Gronwall_Initial}^{-\frac{1}{2}}
   \right)^{-2}
   >0.
 \end{equation}
 Then,  from \eqref{ineq:G},  we have $ G(T) \le
   \Cr{const:Gronwall_Coefficient}$.
\end{proof}

Now,  we are in a position to demonstrate the main theorem.

\begin{proof}[Proof of Thoerem \ref{theorem}]
 
 Define $g(t)$ by
 \begin{equation}
  g(t):=
   \D[\unk](t)
   =
   \int_\Omega
   |\nabla\mu|^2\unk\, dx.
 \end{equation}
 From \eqref{eq:2.Second_time_derivative_F_Final} and
 $\frac{d}{dt}\F[\rho](t)=-\D[\unk](t)$,  we have
 \begin{equation}
  g(t)
   \leq
   -
   \lambda
   g(t)
   +
   \Cr{const:|nablamu|^2f^3/2}
   \left(g(t)\right)^3
   +
   \Cr{const:|nablamu|^2f^3}
   \left(g(t)\right)^{\frac{3}{2}}.
 \end{equation} 
 Then,  we obtain Theorem \ref{theorem} by applying the Gronwall lemma
 (Lemma \ref{lem:2.Gronwall_Lemma}) with $\Cr{const:Gronwall1}=\lambda, 
 \Cr{const:Gronwall2}=\Cr{const:|nablamu|^2f^3}$,  and
 $\Cr{const:Gronwall3}=\Cr{const:|nablamu|^2f^3/2}$.
\end{proof}

\section{Further study}
As we noted in Remark \ref{rem:2.dependency_constant_Interpolation} and \ref{rem3}, the constants $\Cr{const:f>c}$, $\Cr{const:f<c}$, the lower and upper bound of the solution of \eqref{Nonlinear-Fokker-Planck}, depend on the diffusion coefficient, so on $\Cr{const:D_min}$ too.
Thus, dependency of $\Cr{const:f>c}$, $\Cr{const:f<c}$ on $\Cr{const:D_min}$ should be discussed to apply Theorem \ref{theorem}. 
Also, we should study global-in-time solutions for \eqref{Nonlinear-Fokker-Planck}. We are currently working on this subject and will present it elsewhere.

In Theorem \ref{theorem}, we assumed the dimension restriction $n=1,2,3$ and the largeness of the diffusion coefficient $\Cr{const:D_min}$. It is not clear whether these assumptions are essential. 
The key difficulty about the dimension restriction comes from $|\nabla \mu|^3$, the cubic of the gradient of $\mu$.
We may need some regularity results for \eqref{Nonlinear-Fokker-Planck}. 
The smallness of $\nabla d$ naturally arises from the problem close to the case of the constant diffusion coefficient.
Replacing the largeness of $d$ with the smallness of $\nabla d$, and assuming the other assumptions, we can obtain the exponential decay of $\D[\unknown](t)$.
Thus, $\nabla \log d$ might be key to deriving the convergence of the equilibrium state for \eqref{Nonlinear-Fokker-Planck}.

Finally, we mention the degeneracy of the diffusion in \eqref{Nonlinear-Fokker-Planck}.
Since we assumed the positivity of classical solutions in Theorem \ref{theorem}, we do not treat the degeneracy of the diffusion.
For the homogeneous case, namely the diffusion coefficient $d$ is a constant, as in \cites{MR1853037,MR1777035,MR3497125}, we can handle the degeneracy of the nonlinear diffusion of porous medium type. We essentially use the positivity of the solution to deduce the interpolation inequality.
Proposition \ref{prop:|v|^3} with the weight measure $\unknown\,dx$. It was needed to control the cubic nonlinearity of $\nabla \mu$. 
It is an interesting problem to study long-time asymptotic behavior of weak solutions to \eqref{Nonlinear-Fokker-Planck} to address the degeneracy of the diffusion.

\section*{Acknowledgments}

The work of Masashi Mizuno was partially supported by JSPS KAKENHI Grant
Numbers JP22K03376 and JP23H00085.

\bibliography{references}

\end{document}